\documentclass[a4paper,11pt]{article}
\usepackage{geometry}
\usepackage[page]{totalcount}
\title{Filling 1-cycles, 2-cycle complexity, and torsion growth}

\date{}
 
\usepackage[T1]{fontenc}
\usepackage{comment}
\usepackage{microtype}
\usepackage{pinlabel}
\usepackage{float}
\usepackage{subfig}
\usepackage{bbm}
\usepackage{amssymb, amsthm,thmtools,cite,mathtools,setspace,enumitem,tikz-cd}
\usepackage{hyperref}
\usepackage{cleveref}
\usepackage{thm-restate}
\usepackage{bm}
\usepackage[T1]{fontenc} 

 \usepackage{AlegreyaSans}
\usepackage[T1]{fontenc}
\usepackage[p,osf]{cochineal}
\usepackage[stix2]{newtxmath}
\usepackage[cal=boondoxo]{mathalfa}

\makeatletter
\newcommand\RedeclareMathOperator{%
	\@ifstar{\def\rmo@s{m}\rmo@redeclare}{\def\rmo@s{o}\rmo@redeclare}%
}
\newcommand\rmo@redeclare[2]{%
	\begingroup \escapechar\m@ne\xdef\@gtempa{{\string#1}}\endgroup
	\expandafter\@ifundefined\@gtempa
	{\@latex@error{\noexpand#1undefined}\@ehc}%
	\relax
	\expandafter\rmo@declmathop\rmo@s{#1}{#2}}
\newcommand\rmo@declmathop[3]{%
	\DeclareRobustCommand{#2}{\qopname\newmcodes@#1{#3}}%
}

\@onlypreamble\RedeclareMathOperator
\makeatother

\DeclareSymbolFont{bbold}{U}{bbold}{m}{n}

\newtheorem{thm}{Theorem}
\numberwithin{thm}{section}

\newtheorem{lem}[thm]{Lemma}

\newtheorem{prop}[thm]{Proposition}

\newtheorem{cor}{Corollary}

\RedeclareMathOperator{\fill}{\mathtt{fill}}

\DeclareMathOperator{\vol}{vol}

\DeclareMathOperator{\diam}{diam}

\DeclareMathOperator{\len}{{len}}

\newcommand{\Q}{{\mathbb Q}}
\newcommand{\Z}{{\mathbb Z}}

\newcommand{\C}{{\mathbb C}}
\newcommand{\R}{{\mathbb R}}

\newcommand{\cA}{\mathcal{A}}\newcommand{\cB}{\mathcal{B}}

\newcommand{\cT}{\mathcal{T}}

\newcommand{\e}{\varepsilon}
\newcommand{\G}{\Gamma}

\renewcommand{\S}{\Sigma}

\newcommand{\fc}{\mathfrak c}

\newcommand{\PSL}{{\mathsf{PSL}}}

\newcommand{\norm}[1]{\left\lVert#1\right\rVert}

\renewcommand{\d}{ \partial}

\author{Cameron Gates Rudd}
\definecolor{darkspringgreen}{rgb}{0.09, 0.45, 0.27}
\definecolor{darkmidnightblue}{rgb}{0.0, 0.2, 0.4}

\hypersetup{
    colorlinks=true,
    linkcolor=darkmidnightblue,
    citecolor=darkmidnightblue,
    filecolor=darkmidnightblue,
    urlcolor=darkmidnightblue
}

\newcommand{\rank}{{\mathrm{rank}~}}

\newcommand{\genus}{{\mathrm{genus}}}

\renewcommand{\fill}{{\mathsf{fill}}}

\newcommand{\ta}{{\underline{\mathtt{a}}}}
\newcommand{\tb}{{\underline{\mathtt{b}}}}

\newcommand{\ua}{{\underline{\alpha}}}
\newcommand{\ub}{{\underline{\beta}}}

\newcommand{\normalsub}{\unlhd}

\begin{document}

\maketitle

\begin{abstract}
    For 2-complexes with trivial first Betti number, we study quantitative connections between filling inequalities for 1-cycles, the complexity of the second homology, and the size of the torsion part of the first homology. For 2-complexes whose fundamental groups have property $(\tau)$, we prove a geometric lower bound on the logarithm of the size of the first homology of finite covers.
\end{abstract}

Let $\G$ be a finitely generated group with finite abelianization $\G^{ab}$; we call such groups rationally perfect. One can ask: "how big is $\#\G^{ab}?$"
If $X $ is a presentation 2-complex for a rationally perfect group $\G$, $\kappa(X)$ is the length of the longest relator in the corresponding presentation and $\rank(X)$ the number of generators in the presentation, then an oft used inequality attributed to Gabber says
\begin{align}
    \log\#\G^{ab} \leq \rank(X)\cdot\log\kappa(X).
\end{align}

This estimate is key in an ever increasing literature of torsion growth vanishing results; see for instance \cite{AbertGelanderNikolov,cheaprebuilding, Uschold, AGHK, Fruchter}.
Our principal aim in this note is to derive complementary inequalities bounding the size of $\G^{ab}$ from below using the geometry of the presentation complex $X$.

For a 2-complex whose fundamental group has property $(\tau)$ (with respect to \emph{all} finite index normal subgroups), we can prove the following lower bound complementing (1).

\begin{thm}\label{thm:intro tau}
    Let $X_0$ be a finite 2-complex whose fundamental group $\G_0$ has property $(\tau)$. Let $\G\normalsub \G_0$ be a finite index normal subgroup. Let $X =\widetilde X_0/\G$. Then there is a constant $C$ depending only on $X_0$ such that  $$
(\xi_2(X;\Z)-1)\rho_1(X;\Z)-C(1+\log([\G_0:\G])) \leq  C\log\#\G^{ab}.
$$ In particular, for a residual tower of covers $X_i\to X_0,$ we have $$\xi_2(X_i;\Z)\rho_1(X_i;\Z)-o([\G_0:\G_i])\leq C\log\#\G^{ab}_i.$$
\end{thm}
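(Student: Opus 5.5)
The plan is to deduce the statement from a general, deterministic inequality for 2-complexes with $b_1=0$. I take that inequality to be the one established earlier relating $\xi_2$, $\rho_1$ and $\#H_1$; where its exact form is uncertain below, I say so. I am reading the invariants as follows: $\xi_2(X;\Z)$ measures the complexity of integral $2$-cycles, for instance the smallest $\ell^1$-mass of a nonzero integral $2$-cycle or of a basis of $H_2(X;\Z)$. $\rho_1(X;\Z)$ measures how much more expensive integral fillings of $1$-cycles are than real fillings. The deterministic inequality I expect has the shape
$$(\xi_2(X;\Z)-1)\rho_1(X;\Z)\le C_1\log\#H_1(X;\Z)+C_1\log\bigl(\#\text{cells}(X)\cdot\fill_\R(X)\bigr).$$
Here $\fill_\R(X)$ is the real filling constant: the best $F$ such that every real $1$-boundary $z$ bounds a real $2$-chain of mass at most $F\|z\|_1$. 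The heuristic is that $H_1(X;\Z)$ is finite, so it equals the torsion of $\operatorname{coker}(\d_2\colon C_2\to Z_1)$. Its order is therefore a ratio of determinants, roughly the product of the nonzero singular values of $\d_2$ divided by the covolume of the lattice $\d_2 C_2(X;\Z)$ inside $Z_1(X;\Z)$. Large $\xi_2$ forces $\ker\d_2\cap C_2(X;\Z)$ to be sparse. Combined with a large integral/real filling gap $\rho_1$, this forces the image lattice to have large index, which gives the lower bound.

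Since $X\to X_0$ is a cover of degree $n=[\G_0:\G]$ and $\G^{ab}=H_1(X;\Z)$, the cell counts and local combinatorics of $X$ are those of $X_0$ multiplied by $n$. The only remaining task is therefore to show $\fill_\R(X)\le C_2(1+\log n)$, or even polynomial in $n$, with $C_2$ depending only on $X_0$. Then $\log(\#\text{cells}\cdot\fill_\R)\le C(1+\log n)$, and the first inequality follows after rescaling constants. The residual-tower statement is then immediate. In a residual tower, $\xi_2(X_i;\Z)\to\infty$ (or the $-1$ is otherwise absorbed), and $\log[\G_0:\G_i]=o([\G_0:\G_i])$, so the term $\rho_1$ is dominated by $\xi_2\rho_1-o([\G_0:\G_i])$.

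The main obstacle is the uniform real filling bound, and this is where property $(\tau)$ enters. Property $(\tau)$ makes the $1$-skeleta of the covers $X$ a uniform expander family. Consequently $\diam X^{(1)}\le C_3\log n$, and there is a uniform spectral gap for the Laplacian on $0$-cochains. I would first decompose an arbitrary real $1$-boundary $z$ into loops: connect each edge of $z$ to a fixed basepoint along geodesics of length $O(\log n)$. This writes $z$ as a combination of at most $\|z\|_1$ based loops, each of length $O(\log n)$. It then remains to fill each short loop, as a real chain, with mass $O(\log n)$. I would attempt this in two steps. First, use the spectral gap to bound the norm of the inverse of $\d_1$ on $1$-cochains orthogonal to coboundaries; by duality this controls $\d_2$-fillings, given that $b_1(X;\R)=0$ for every cover, which $(\tau)$ guarantees. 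Second, if that duality argument does not give a linear bound, fall back on a cruder estimate: $\fill_\R(X)$ is at most the inverse of the smallest nonzero singular value of $\d_2$. I would then show that this singular value is at least $n^{-C}$ using integrality together with the spectral gap, since a polynomial bound suffices after taking logarithms. I expect this singular-value or duality step to require the most care.
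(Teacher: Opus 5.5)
There is a genuine gap. Your reduction rests on a ``deterministic inequality'' whose shape you guess but never prove. The guess is also built on a misreading of the invariants. In the paper, $\rho_1(X;\Z)=\inf_z\norm{z}/\fill_1(z;\Z)$ is the integral isoperimetric constant (large $\rho_1$ means integral fillings are cheap). It is not an integral-versus-real filling gap. Likewise $\xi_2(X;\Z)$ is the largest successive minimum of $H_2(X;\Z)$, and no real filling constant enters anywhere.

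Your determinant heuristic does not support the direction you need either. When $b_1(X)=0$ one has $\#H_1(X;\Z)=\det'(\d_2)/\bigl(\mathrm{covol}\,Z_2(X;\Z)\cdot\mathrm{covol}\,Z_1(X;\Z)\bigr)$. So a sparse lattice of integral $2$-cycles sits in the denominator and by itself \emph{decreases} the index.

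Finally, the step you flag as the main obstacle cannot be carried out as proposed. Property $(\tau)$ gives a uniform gap for $\d_1\d_1^*$ on $0$-chains, which controls only the orthogonal complement of $Z_1$ in $C_1$. Real fillings of $1$-boundaries are governed by the least nonzero singular value of $\d_2$, about which $(\tau)$ gives no information. Integrality alone only yields a lower bound of the form $c^{-n}$ for that singular value. Its logarithm is linear in $n$ and would swamp the $\log n$ term.

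The missing idea is that the torsion enters through a \emph{diameter}, not a determinant. The deterministic inequality actually used is Theorem~\ref{thm:filling-contraction-and-diameter-bound}, applied to the universal abelian cover $Y=\widetilde X^{ab}\to X$. Since $[\G,\G]$ is characteristic in $\G$, hence normal in $\G_0$, $Y$ is a regular cover of $X_0$ of degree $[\G_0:\G]\cdot\#\G^{ab}$. Lemma~\ref{lem:tau} then gives $\diam(Y^{(1)})\le C(1+\log([\G_0:\G]\#\G^{ab}))$.

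Your loop decomposition is the right move, but it must be done along a spanning tree of $Y$ and pushed down to $X$. Loops built from a spanning tree of $X$ have length $O(\log n)$, but they generate $H_1(X;\Z)$, so some have no integral filling at all. Because $Y\to X$ is zero on $H_1$, the pushed-down loops $z_e=\phi_1(e-c_v+c_u)$ are integrally null-homologous and have length at most $2\diam(Y^{(1)})+1$. By the definition of $\rho_1(X;\Z)$ alone, each bounds an integral chain $h_1(e)$ of norm at most $(2\diam(Y^{(1)})+1)/\rho_1(X;\Z)$.

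The defects $\phi_2(\tilde\sigma)-h_1(\d\tilde\sigma)$ are then integral $2$-cycles of norm at most $\kappa_2(X;\Z)(2\diam(Y^{(1)})+1)/\rho_1(X;\Z)+1$. The transfer $S\mapsto\tilde S$ satisfies $\phi_2(\tilde S)=\#\G^{ab}\cdot S$ and $h_1(\d\tilde S)=0$. It therefore shows these defects span a full-rank subgroup of $H_2(X;\Z)$, which bounds $\xi_2$. Rearranging, with $\kappa_2$ uniform over covers, gives the inequality.

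For the tower statement, the $-\rho_1$ term is absorbed because $\rho_1(X_i;\Z)$ is bounded and $\log[\G_0:\G_i]=o([\G_0:\G_i])$. The paper uses $\rho_1(X_i;\Z)\to0$, Proposition 2.11 of \cite{RuddT}. It does not rely on $\xi_2\to\infty$.
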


The main quantities appearing in Theorem \ref{thm:intro tau} are precisely defined later in this note, but briefly, $\xi_2(X;\Z)$ is the largest successive minimum for $H_2(X;\Z)$ using the $\ell^1$ norm, and $\rho_1(X;\Z)$ is an isoperimetric constant encoding how difficult it is to fill an integral $1$-cycle with an integral $2$-chain.

We point out that Gabber's inequality (1) combined with Theorem \ref{thm:intro tau} implies (in the present setting) that there is a uniform constant $C$ such that $$(\xi_2(X_i;\Z)-1)\rho_1(X_i;\Z)\leq Cd_i,$$ where $d_i$ is the degree of the covering.\footnote{This is maybe a bit surprising, as one can imagine the homology of $X_i$ consisting of large surfaces that are "expanders", and thus do not directly seem to force the 1-cycle isoperimetric constant $\rho_1(X_i;\Z)$ to be comparably small.}

The primary motivating question for this inequality is the following open problem.
\textbf{Question 1.} 
\emph{
    Is there a finitely presented residually finite group $\G$ with a nested sequence of finite index normal subgroups $\G_i\normalsub \G$ such that $\cap\G_i=\{1\}$ for which the torsion part of $\G^{ab}_i$ has cardinality that grows exponentially in the index $[\G:\G_i]$?}
    
This question appears in \cite{Nikolov} and is closely related to conjectures of Bergeron-Venkatesh, Lück, and Lê \cite{BergeronVenkatesh, Lueck, Le }, which posit the existence of such sequences in various settings.  

Theorem \ref{thm:intro tau} suggests an approach to exhibit such examples:
\begin{cor} Let $X$ be a finite 2-complex whose fundamental group $\G$ has property $(\tau)$. Let $\G_i\normalsub\G$ be a residual chain and let $X_i=\widetilde X/\G_i$ be the corresponding covers of $X$. Then if $$[\G:\G_i]\leq C \xi_2(X_i;\Z)\rho_1(X_i;\Z),$$ for some constant $C>0$, then $$0<\liminf_{i\to\infty} \frac{\log \#\G_i^{ab}}{[\G:\G_i]}.$$
\end{cor}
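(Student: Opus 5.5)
The corollary should follow directly from the second inequality of Theorem \ref{thm:intro tau}, applied to the residual chain $\G_i$. I will set $d_i=[\G:\G_i]$ and take $X_0=X$ as the base complex; its fundamental group $\G$ has property $(\tau)$, which is the hypothesis the theorem requires. The theorem then supplies a constant $C_0$, depending only on $X$, such that
\begin{align*}
\xi_2(X_i;\Z)\rho_1(X_i;\Z)-o(d_i)\leq C_0\log\#\G_i^{ab}.
\end{align*}
Before using this, I will check that the "residual tower" hypothesis of the theorem is met. A residual chain is nested, consists of finite index normal subgroups, and has trivial intersection, which is exactly what the theorem asks for.

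The key step is to combine this with the assumed lower bound $d_i\leq C\,\xi_2(X_i;\Z)\rho_1(X_i;\Z)$. Substituting gives
\begin{align*}
\frac{d_i}{C}-o(d_i)\leq C_0\log\#\G_i^{ab}.
\end{align*}
Dividing by $d_i$ and letting $i\to\infty$, the $o(d_i)/d_i$ term vanishes. Here I use $d_i\to\infty$: since $\cap\G_i=\{1\}$ and $\G$ is infinite, the indices cannot stay bounded. Hence
\begin{align*}
\liminf_{i\to\infty}\frac{\log\#\G_i^{ab}}{d_i}\geq\frac{1}{CC_0}>0.
\end{align*}
One degenerate case needs a remark. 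If $\G$ were finite, the chain would stabilize at $\{1\}$ with $d_i$ bounded, so the $o(d_i)$ term would not vanish. I will exclude this case implicitly, or handle it by noting that the little-$o$ statement in Theorem \ref{thm:intro tau} is only meaningful for an infinite tower.

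The only point needing care, and what I expect to be the main (mild) obstacle, is interpreting the $o(d_i)$ term correctly. It comes from the first inequality of Theorem \ref{thm:intro tau}, where the error is $C(1+\log d_i)$ together with the replacement of $\xi_2-1$ by $\xi_2$. I must make sure that this replacement costs only $\rho_1(X_i;\Z)=o(d_i)$, or else argue directly from the first inequality. If I argue directly, I would use $(\xi_2-1)\rho_1\geq \xi_2\rho_1-\rho_1$, which requires that $\rho_1(X_i)$ is sublinear in $d_i$. I will take this from the theorem's second statement as given, since the theorem asserts it for residual towers. If that were not available, I would fall back on the bound $(\xi_2-1)\rho_1\leq C'd_i$ obtained from Gabber's inequality, but the corollary as stated only needs the theorem's second inequality, so I will simply cite it.

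Note also that $\#\G_i^{ab}$ is finite. This holds because property $(\tau)$ passes to finite index subgroups in the relevant sense, so $\G_i$ has finite abelianization. It is implicit in the theorem's statement.
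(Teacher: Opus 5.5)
Your proposal is correct and follows the paper's own (implicit) derivation: substitute $[\G:\G_i]\le C\,\xi_2(X_i;\Z)\rho_1(X_i;\Z)$ into the second inequality of Theorem \ref{thm:intro tau} and divide by the index, and your remark excluding finite $\G$ is apt. As a minor aside, your Gabber-based fallback would not give $\rho_1(X_i;\Z)=o([\G:\G_i])$, but no fallback is needed: testing $\rho_1$ on the boundary of a single 2-cell, whose integral filling has norm at least $1$, gives $\rho_1(X_i;\Z)\le\kappa(X)$ uniformly.
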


Note that Proposition 2.11 in \cite{RuddT} implies the constant $\rho_1(X_i;\Z)$ must tend to zero in any such sequence; thus in order to exhibit exponential torsion growth, one needs strong growth of $\xi_2(X_i;\Z)$. Note also that in the setting of hyperbolic 3-manifolds, examples exist for which an analogue of $\xi_2(X_i;\Z)$  grows exponentially in volume, and such that an analogue of $\rho_1(X_i;\Z)$ decays (nearly) exponentially in volume \cite{BrockDunfield, Rudd}.

A key step is the following geometric estimate (precise definitions are given in the next section), which is applicable in more general settings, see for instance Theorem \ref{thm:main}.
\begin{thm}\label{thm:intro-filling-contraction-and-diameter-bound}
Let $\phi:Y\to X$ be a polygonal covering map where $X$ and $Y$ are finite 2-complexes such that $\phi$ is filling and $H_2(X;\Z)\neq0$.  Then
$$
\xi_2(X;\Z) \leq \kappa_2(X;\Z)\frac{2\diam(Y^{(1)})+1}{\rho_1(X;\Z)}+1.
$$
\end{thm}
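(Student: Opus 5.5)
The plan is to produce an explicit generating set for $H_2(X;\Z)=Z_2(X;\Z)$ consisting of cycles of $\ell^1$ norm at most $\kappa_2(X;\Z)\frac{2\diam(Y^{(1)})+1}{\rho_1(X;\Z)}+1$. Once every integral $2$-cycle is an integer combination of such short cycles, these cycles span a full-rank sublattice. The largest successive minimum $\xi_2(X;\Z)$ is then at most the maximal norm in the generating set, which is the bound claimed. The "$+1$" should come from a single $2$-cell, and the fraction from filling a $1$-cycle of length at most $\kappa_2(2\diam(Y^{(1)})+1)$, using that $\rho_1$ is the isoperimetric constant. I am reading $\rho_1$ as $\fill(c)\le \len(c)/\rho_1$ for integral boundaries $c$.

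\emph{Step 1 (tree paths in $Y$).} Fix a basepoint $y_0\in Y^{(0)}$. For each vertex $y$ of $Y$ choose a geodesic edge path $p_y$ in $Y^{(1)}$ from $y_0$ to $y$, so $\len(p_y)\le \diam(Y^{(1)})$. For each $2$-cell $\sigma$ of $X$ choose a lift $\tilde\sigma$ in $Y$; this is possible because $\phi$ is polygonal. For each edge $\tilde e=[u,v]$ of $\partial\tilde\sigma$, form the loop $p_u\cdot\tilde e\cdot p_v^{-1}$, of length at most $2\diam(Y^{(1)})+1$. Pushing these loops forward by $\phi$ expresses $\partial\sigma$, up to the tree-path terms, as a sum of at most $\kappa_2(X;\Z)$ such loops. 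The tree-path terms telescope around $\partial\tilde\sigma$. The result is a $1$-chain $\gamma_\sigma=\phi_*(\cdots)$ in $X$ that differs from $\partial\sigma$ by the image of a cycle built from tree paths, and whose length is at most $\kappa_2(X;\Z)(2\diam(Y^{(1)})+1)$.

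\emph{Step 2 (using that $\phi$ is filling).} Here I use the hypothesis that $\phi$ is filling, which I interpret as saying that the images under $\phi$ of the relevant loops of $Y$ are null-homologous in $X$. This makes each $\gamma_\sigma-\partial\sigma$ an integral boundary in $X$. By the definition of $\rho_1(X;\Z)$ it has an integral filling $f_\sigma$ with
\[
\|f_\sigma\|_1\le \kappa_2(X;\Z)\,\frac{2\diam(Y^{(1)})+1}{\rho_1(X;\Z)}.
\]
I then set $w_\sigma=\sigma - f_\sigma'$, where $f_\sigma'$ is the filling arranged so that $\partial w_\sigma$ involves only tree-path terms. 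Norms are additive, so $\|w_\sigma\|_1\le 1+\|f_\sigma\|_1$.

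\emph{Step 3 (generation).} Given $z=\sum a_\sigma\sigma\in Z_2(X;\Z)$, compare $z$ with $\sum a_\sigma w_\sigma$. Their difference is a combination of the fillings of the tree-path terms. Because $\partial z=0$, these tree-path boundary contributions cancel exactly, as they do when coning a cycle to a basepoint. I will therefore choose the $f_\sigma$ coherently: fill the projected loop of each edge once, and use the sum of these edge fillings for each $\sigma$. With that choice, $z-\sum a_\sigma w_\sigma$ is itself a sum of $2$-cycles each of which is a $w_{\sigma'}$-type short cycle. Then $H_2(X;\Z)$ is generated by cycles of norm at most the bound, and since $H_2\neq 0$ the successive-minimum estimate follows.

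The main obstacle is Step 3, together with pinning down the exact meaning of "filling". The fillings must be chosen consistently, edge by edge, so that the cancellation of tree terms genuinely happens over $\Z$ rather than only over $\Q$. The factor $\kappa_2$ must also come out as the number of edges of a single cell rather than something larger. I would first try to fill each projected edge-loop $\phi(p_u\tilde e p_v^{-1})$ once, and define everything from those fillings.
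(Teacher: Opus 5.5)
Your Steps 1--2 are the paper's construction. Once you fix, for each edge $\tilde e=[u,v]$ of $Y$, an optimal integral filling $h_1(\tilde e)$ of the loop $\phi(p_u+\tilde e-p_v)$, your cycle $w_\sigma=\sigma-h_1(\d\tilde\sigma)$ is the paper's $\fc(\tilde\sigma)$, and your norm bound is right. Note that $\gamma_\sigma=\d\sigma$ on the nose; what has to be filled are the individual edge loops, not $\gamma_\sigma-\d\sigma$.

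The gap is Step 3. With one chosen lift per cell, $z-\sum_\sigma a_\sigma w_\sigma=h_1\bigl(\d\sum_\sigma a_\sigma\tilde\sigma\bigr)$, and $\d z=0$ does not make this vanish. The edges of $\d z$ cancel in $X$, but $h_1$ is evaluated on edges of $Y$. Two cells sharing an edge $e$ of $X$ may have chosen lifts meeting different lifts $\tilde e\neq\tilde e'$, and then $h_1(\tilde e)$ and $h_1(\tilde e')$ fill different loops. No clever choice of lifts repairs this, because an integral $2$-cycle of $X$ need not be the image of any $2$-cycle of $Y$. For $X=T^2\cup_{a^2}D\cup_{b^2}D'$ and its universal cover (degree $4$, trivially filling), one computes $\phi_*H_2(Y;\Z)=2H_2(X;\Z)$. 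Your conclusion can also genuinely fail for a one-lift family. Take $X=\R P^2\vee S^2$ (a bigon on $a^2$, and bigons $D_1,D_2$ forming the sphere) with its double cover. There one can pick geodesic tree paths, optimal edge fillings and one lift per cell so that every $w_\sigma$ is $0$. So the assertion that $z-\sum a_\sigma w_\sigma$ is a combination of short $w$-cycles is unjustified.

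The missing idea is the transfer, together with settling for full rank instead of generation. Define $w_{\tilde\sigma}=\phi_2(\tilde\sigma)-h_1(\d\tilde\sigma)$ for \emph{every} $2$-cell $\tilde\sigma$ of $Y$. For $S=\sum_\sigma a_\sigma\sigma\in Z_2(X;\Z)$, let $\tilde S=\sum_{\tilde\sigma}a_{\phi(\tilde\sigma)}\tilde\sigma$, summing over all lifts. Because $\phi$ is a covering, $\tilde S$ is a cycle in $Y$. By linearity of $h_1$ on $C_1(Y;\Z)$ (this is where filling edge by edge is really used), $h_1(\d\tilde S)=0$, so $\sum_{\tilde\sigma}a_{\phi(\tilde\sigma)}w_{\tilde\sigma}=\phi_2(\tilde S)=dS$ with $d=\deg\phi$. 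Hence the $w_{\tilde\sigma}$ with $\phi(\tilde\sigma)$ in the support of a $2$-cycle span a subgroup containing $d\,H_2(X;\Z)$; that support condition is what guarantees $\len(\d\tilde\sigma)\le\kappa_2(X;\Z)$. The argument only shows this span has finite index, not that it is everything. Full rank is all that the largest successive minimum $\xi_2(X;\Z)$ requires. This is the paper's Lemma \ref{lem:2-cell-homological}, after which your norm estimate finishes the proof.
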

To control torsion in homology via Theorem \ref{thm:intro-filling-contraction-and-diameter-bound}, one takes $Y$ to be the cover of $X$ corresponding to the abelianization map $\pi_1X\to H_1(X;\Z)$, which is finite when $b_1(X)=0$, and then relates $\diam(Y^{(1)})$ to $\#H_1(X;\Z)$.

 This estimate is inspired by recent work of Zung in the setting of Riemannian 3-manifolds\cite{Zung}. Zung's work builds on an argument of Gromov that has been applied in myriad ways to study higher expansion and waist inequalities (see for instance \cite{Gromov, DKW,KozlovMeshulam,BaderSauer}). The most interesting new phenomenon in our setting is that the lower bound involves the largest successive minimum $\xi_2(X;\Z),$ which as noted above can potentially grow much faster than index. The analogous result in \cite{Zung} (Theorem 1.3) has a linear volume term on the left;  Proposition 2.11 in \cite{RuddT} therefore obstructs such an estimate from ever proving exponential torsion growth along residual chains.



Congruence subgroups of uniform arithmetic lattices in $\PSL_2\C$ are conjectured to have exponential torsion growth in their homology \cite{BergeronVenkatesh}.
With this 3-dimensional case in mind, in Section \ref{sec:Heegaard}, we consider 2-complexes associated to Heegaard splittings of rational homology 3-spheres. For these 2-complexes, $\kappa_2$ is uniformly bounded and one can replace the integral filling constant $\rho_1(X;\Z)$ with a rational variant $\rho_1(X;\Q)$ which should be much easier to estimate in practice. 

Another interesting class of groups to consider would be 2-dimensional residually finite hyperbolic groups with property (T); property (T) implies strong expansion properties \cite{BaderSauer}, and negative curvature provides many additional tools for studying the geometry of families of complexes.  Closely related to this class of groups are random groups with density between 1/3 and 1/2, which are 2-dimensional, hyperbolic, and have property (T), but are \emph{not} known to be residually finite, however. In any case, investigating the amount of torsion in the abelianization of finite index normal subgroups of such groups would be very interesting.

\section{Filling maps and partial contractions}\label{sec:filling}
\subsection{Definitions and notation}
In this paper we work with polygonal cellular 2-complexes. A 2-cell is a regular polygon with every boundary edge of length $1$. Note this makes every 2-complex a metric space and the path metric on the 1-skeleton is uniformly comparable to the induced metric.
A continuous map of cell complexes is polygonal if it sends $i$-cells to $i$-cells homeomorphically.
 We denote by $X^i$ the set of $i$-cells in a cell complex $X$ and by $X^{(i)}$ the $i$-skeleton.

Fix a finite 2-complex $X$. Let $(C_p(X;\Q),\d_p)$ be the rational chain complex of the cell complex $X$. To make this chain complex geometric, we equip it with the $\ell^1$ norm induced by the basis of cells. That is, if $$\sigma = \sum a_i\sigma_i\in C_p(X;\Q),$$ where $\sigma_i$ are the $p$-cells of $X$, then $$\norm{\sigma}=\sum|a_i|.$$

Let $R<\Q$ be a subring; typically $R\in\{\Z,\Q\}.$ The chain complex $C_*(X;R)$ inherits this norm.
Denote the space of $p$-cycles with coefficients in $R$ by
$$
Z_p(X;R)= \ker \big(\d_p:C_p(X;R)\to C_{p-1}(X;R)\big).
$$
A cycle $z\in Z_p(X;R)$ is $R$-exact if there is a $(p+1)$-chain $A$ with $R$-coefficients such that $\d A = z$. We will measure the complexity of exact cycles using filling functions associated to the norm $||\cdot||$ and the ring $R$. The filling function measures the minimal norm of a $(p+1)$-chain with $R$-coefficients ``filling'' $z$:
$$
\fill_p(z;R)= \inf \big\{||A||~:~ \d A=z \text{ and } A\in C_{p+1}(X;R)\big\}.
$$
Note that when $R\in\{\Z,\Q\}$, the above is a minimum. For $\Z$-coefficients, this is just a finiteness statement. With $\Q$-coefficients, this is because finding optimal fillings is a linear programming problem.

We also note that the choice of coefficient ring can have a massive effect on the filling function. For instance, take the complex corresponding to the presentation $\langle x\mid x^n,x\rangle$. Then the unique primitive $1$-cycle has an optimal integral filling with norm $1$ but over $\Q$ is filled by a chain with norm $1/n$.


Associated to a filling function is an isoperimetric constant that compares the size of a cycle to its filling. We define
$$
\rho_p(X;R)=\inf_{\substack{z\in \d C_{p+1}(X;R)\\ z\neq0}} \frac{||z||}{\fill_p(z;R)}.
$$

One can sometimes prove $\rho_p(X;\Q)=\rho_p(X;\Z)$. The rational filling constant is much more accessible, and potentially much larger, than the integral one, but it is the integral constant that is needed for the arguments in this paper. Such a coefficient upgrade is a key tool in \cite{BaderSauer} and is studied as well in \cite{Szymanski}. 
The following such coefficient upgrade follows from Theorem 2.8 in \cite{BaderSauer}, but we include a proof of the special case here as it is quite short.

Say a 2-complex is \emph{fundamental} if $H_2(X;\Z)$ is infinite cyclic and every cell in the support of the generator appears with multiplicity one. We interpret this condition as saying the 2-complex has second homology akin to a manifold's fundamental class; hence the name.

\begin{prop}\label{lem:median}
    If $X$ is a fundamental 2-complex, then  $\rho_1(X;\Z)= \rho_1(X;\Q)$.
\end{prop}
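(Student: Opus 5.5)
Throughout, we only need to compare the two infima, because every $\Z$-exact cycle is $\Q$-exact and the reverse inequality is the real content.

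\textbf{The easy inequality.} For a $\Z$-exact cycle $z$ we have $\fill_1(z;\Q)\le \fill_1(z;\Z)$. Hence
$$\frac{\norm{z}}{\fill_1(z;\Z)}\ge \frac{\norm{z}}{\fill_1(z;\Q)}\ge \rho_1(X;\Q),$$
and taking the infimum gives $\rho_1(X;\Z)\ge\rho_1(X;\Q)$.

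\textbf{The reverse inequality.} It suffices to show that every nonzero $\Z$-exact $1$-cycle $z$ admits an integral filling $A'$ with $\norm{A'}\le \fill_1(z;\Q)$. Then $\fill_1(z;\Z)=\fill_1(z;\Q)$ for integral exact cycles, and so $\rho_1(X;\Z)$ is an infimum over a subfamily of the ratios defining $\rho_1(X;\Q)$. To handle the rest of the family, note that every $\Q$-exact cycle is, after scaling by an integer $N$, $\Z$-exact. Indeed $H_1(X;\Z)\otimes\Q=H_1(X;\Q)$, so $Nz$ is integral and null-homologous rationally, hence torsion in $H_1(X;\Z)$; multiplying by a further integer makes it integrally exact. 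The ratio $\norm{z}/\fill_1(z;\Q)$ is scale invariant, so $\rho_1(X;\Q)$ may be computed over $\Z$-exact integral cycles, and the two constants agree.

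\textbf{The median trick.} Fix an integral $z=\d A$ with $A\in C_2(X;\Z)$, and let $\sigma$ generate $H_2(X;\Z)\cong\Z$. Since $\sigma$ spans $\ker\d_2\otimes\Q$, every rational filling of $z$ has the form $A+t\sigma$ with $t\in\Q$, and every integral filling has the form $A+n\sigma$ with $n\in\Z$. Writing $A=\sum a_i\sigma_i$, the fundamental hypothesis says $\sigma$ has coefficient $\pm1$ on its support $S$ and $0$ elsewhere. After reorienting cells we may assume all these coefficients are $+1$. Then
$$f(t)=\norm{A+t\sigma}=\sum_{i\in S}|a_i+t|+\sum_{i\notin S}|a_i|.$$
This is a convex piecewise-linear function of $t$ whose breakpoints $-a_i$ are integers. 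It is minimized on the set of medians of the multiset $\{-a_i\}_{i\in S}$, and that set always contains one of the integers $-a_i$. Taking $n$ to be such a median gives an integral filling with $\norm{A+n\sigma}=\min_{t\in\Q}f(t)=\fill_1(z;\Q)$, which is what we needed.

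\textbf{Potential obstacles.} The only delicate points are bookkeeping. First, the multiplicity-one hypothesis is exactly what makes the weights in $f$ equal. With unequal weights, a weighted median still occurs at a breakpoint $-a_i/m_i$, which need not be an integer. Second, we must justify that $\ker\d_2$ over $\Q$ is spanned by $\sigma$. This holds because $H_2(X;\Q)=H_2(X;\Z)\otimes\Q$, as $X$ has no $3$-cells.
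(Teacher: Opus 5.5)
Your main argument is the paper's: rational fillings of $z$ differ from an integral one by multiples of the generator, whose coefficients lie in $\{0,\pm1\}$, so $t\mapsto\norm{A+t\sigma}$ is minimized at a median of integers. You also supply a step the paper leaves implicit. A $\Q$-exact cycle becomes $\Z$-exact after scaling by an integer, and the ratio $\norm{z}/\fill_1(z;\Q)$ is scale invariant. So $\rho_1(X;\Q)$ can be computed over integral $\Z$-exact cycles, which is what makes the comparison of the two infima legitimate.

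Your first paragraph (the ``easy inequality''), however, has the inequality backwards. From $\fill_1(z;\Q)\le\fill_1(z;\Z)$ one gets $\norm{z}/\fill_1(z;\Z)\le\norm{z}/\fill_1(z;\Q)$. So the trivial direction is $\rho_1(X;\Z)\le\rho_1(X;\Q)$, and it can be strict for non-fundamental complexes. For the paper's example $\langle x\mid x^n,x\rangle$, one has $\rho_1(X;\Z)=1<n=\rho_1(X;\Q)$. The median trick is what proves $\rho_1(X;\Z)\ge\rho_1(X;\Q)$; your framing sentence calls that the ``easy'' direction and the other one ``the real content,'' which is the wrong way round.

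This error does not break the proof. Your second and third paragraphs show that $\fill_1(z;\Z)=\fill_1(z;\Q)$ for every integral $\Z$-exact $z$, and that both constants are infima over that same family. Equality follows directly from this, without any use of the first paragraph. You should delete or correct that paragraph and the framing sentence.
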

\begin{proof}
    Let $A$ be the optimal integral filling of a 1-cycle $z.$ Let $B$ be any other rational filling of $z$. Then $S=A-B$ is a 2-cycle. Because $X$ is fundamental, $S=t_BF$  for some $t_B\in\Q$, where $F=\sum \e_i\sigma_i$ is the generator of $H_2(X;\Z)$ and $\e_i\in\{0,-1,1\}.$  Since $B=A-t_BF$, finding the optimal rational filling of $z$ is equivalent to minimizing $f(t) = ||B_t||,$ where  $B_t:= A-tF$; if we can show that the minimum is realized by $t\in\Z$, then we are done.
    Write $A = \sum a_i\cdot \sigma_i,$ where $a_i\in\Z.$
    Then $$||B_t|| = \sum_{\e_i = 0}|a_{i}| + \sum_{\e_i=1}|a_i-t|+\sum_{\e_i=-1}|a_i+t|.$$
    The first summand has no effect on the minimization problem involving $t$, so we can minimize instead $$ f_0(t) = \sum_{\e_i=1}|a_i-t|+\sum_{\e_i=-1}|a_i+t|.$$
    
    The number $t$ minimizing $ f_0(t)$ is just the median of a finite set of integers given by $\e_i a_i$. We can therefore take the median to be an integer. It follows that the optimal rational filling can be taken to be integral and thus $\rho_1(X;\Z)= \rho_1(X;\Q)$.
\end{proof}

Because $X$ is a 2-complex, we can identify $H_2(X;R)$ with the space $Z_2(X;R)=\ker(\d_2)$ of all 2-cycles. We define the 2-cycle complexity $\xi_2(X;R)$  as follows:
$$\xi_2(X;R)=\inf\left\{\max_{j=1}^b||S_j||~:~ S_j \text{ generate rank $b$ submodule }H_2(X;R)\right\},$$ assuming $H_2(X;R)$ has dimension $b>0$, otherwise set $\xi_2(X;R)=0$. Note that when $R=\Q$, one always has $\xi_2(X;\Q)=0,$ so this is only interesting when the ring $R$ is small. In this note, we exclusively use $R=\Z$. In this case, one can identify $\xi_2(X;\Z)$ with the largest successive minimum of the lattice $H_2(X;\Z)\subset H_2(X;\R).$

The \emph{2-cycle curvature} $\kappa_2(X;R)$ is defined to be the maximal length (that is, its length as a cellular path) of the boundary of a 2-cell that is in the support of a 2-cycle with $R$-coefficients, assuming one exists: $$\kappa_2(X;R) = \max\left\{ \len(\d \sigma)~:~ \text{$\sigma$ is in the support of a 2-cycle in $Z_2(X;R)$}\right\}. $$ Otherwise, define $\kappa_2(X;R)=\kappa(X)$ to be the maximal length of the boundary of any 2-cell.

\subsection{Filling maps and diameter}
Let $X,Y$ be a pair of polygonal 2-complexes. Let $C_i = C_i(X;\Z)$ and $D_i = C_i(Y;\Z)$ be the corresponding cellular chain complexes with integer coefficients. 

A polygonal map $\phi:Y\to X$ induces maps $D_i\to C_i$ and $H_i(Y;\Z)\to H_i(X;\Z)$. The map $\phi$ is called \emph{filling} if $\phi_*:H_1(Y;\Z)\to H_1(X;\Z)$ is the zero map; equivalently, every $1$-cycle $z\in Z_1(Y;\Z)$ maps to an exact $1$-cycle: $\phi_*(z)\in \d C_2(X;\Z)$.

To such a filling map, we define what we call a \emph{tight partial contraction}; note that these are not unique. This will almost be a chain homotopy between $\phi$ and a constant map, but we only require it satisfy the properties of a chain homotopy in low degrees. The ``tightness'' comes from a metric condition we impose on the maps.

Recall that a chain homotopy $h_*$ between cellular maps $\phi$ and $f$ is the data of maps
$$
h_i:D_i\to C_{i+1} \text{ such that } \phi_i-f_i = \d^X h_i + h_{i-1}\d^Y.
$$

Tight partial contractions are constructed as follows.
\begin{enumerate}

    \item Fix a basepoint $y_0$ in $Y$ and set $x_0 = \phi(y_0)$. Let $f_{x_0}:Y\to X$ be the map sending every point in $Y$ to $x_0$.
    \item Fix a spanning tree $\cT_Y$ in the 1-skeleton of $Y$ with root $y_0$ such that for every $y\in Y^0$, $d_{\cT_Y}(y_0,y)\leq\diam(Y^{(1)}).$
    \item For each vertex $y\in Y,$ denote by $c_y$ the unique path in $\cT_Y$ from $y_0$ to $y.$
    \item For every vertex $y$ of $Y$, define $h_0(y) = \phi_1(c_y)$; this is the image of the chain $c_y$ corresponding to the unique path in $\cT_Y$ from $y_0$ to $y$.
    \item For an edge $e=(u,v)$ in $Y$, set
    $$
    z_e =\phi_1(e)-h_0(\d e)=\phi_1(e -c_{v}+c_{u}),
    $$
    
    which is a $1$-cycle in $X$ since $\d^Y(e-c_v+c_u) = (v-u) - (v-y_0) +(u-y_0)= 0$. 
    \item Because $\phi$ is filling, $z_e$ is exact in $X$. Let $A_e\in C_2(X;\Z)$ be a norm minimizing 2-chain satisfying $\d A_e = z_e$ and define $$h_1(e) = A_e\in C_2(X;\Z).$$
    \item Lastly, set $h_2(A) = 0\in C_3(X;\Z)=0$.
\end{enumerate}

\begin{prop}\label{prop:part-contr}
A tight partial contraction satisfies the chain homotopy condition between $\phi$ and the corresponding constant map in degree $0$ and $1$.
\end{prop}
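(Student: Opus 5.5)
The plan is to verify the chain homotopy identity $\phi_i - f_i = \d^X h_i + h_{i-1}\d^Y$ directly on basis cells, for $i=0$ and $i=1$. Here the constant map $f = f_{x_0}$ induces $f_0(y)=x_0$ for every vertex $y$, and $f_1=0$ on edges, since an edge is sent to a point and so carries no 1-cell. We also use the convention $h_{-1}=0$, because $D_{-1}=0$.

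\emph{Degree 0.} For a vertex $y$, we must show $\phi_0(y)-x_0=\d^X h_0(y)$. By definition $h_0(y)=\phi_1(c_y)$. Since $\phi$ is a cellular map, it commutes with boundaries, so $\d^X\phi_1(c_y)=\phi_0(\d^Y c_y)$. Because $c_y$ is a path in $\cT_Y$ from $y_0$ to $y$, we have $\d^Y c_y=y-y_0$. Therefore
$$\d^X h_0(y)=\phi_0(y)-\phi_0(y_0)=\phi_0(y)-x_0,$$
which is the required identity. The case $y=y_0$ is covered, with $c_{y_0}$ the empty path.

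\emph{Degree 1.} For an edge $e=(u,v)$, we need $\phi_1(e)-0=\d^X h_1(e)+h_0(\d^Y e)$. By construction $\d^X h_1(e)=\d A_e=z_e=\phi_1(e)-h_0(\d e)$. Rearranging gives exactly the identity. Such an $A_e$ exists because $z_e$ is a 1-cycle: this is the boundary computation given in the construction, combined with the previous step and $\phi$ commuting with $\d$. Since $\phi$ is filling, $z_e$ is integrally exact, so a minimizing integral filling exists by the finiteness remark earlier. Both sides are $\Z$-linear, so checking the identity on basis cells suffices.

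The only step needing care is the well-definedness of $h_1$. This means confirming that $z_e$ really lies in the image of $\phi_1$ of a \emph{cycle} in $Y$, so that the filling hypothesis applies, and that an optimal integral filling exists. Both are already provided by the construction and the discussion of $\fill$. No identity in degree $2$ is claimed, so nothing further is needed there.
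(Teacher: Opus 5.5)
Your proof is correct and follows the paper's argument. Both check the identity cell by cell: degree $0$ comes from $\d^Y c_y = y - y_0$, and degree $1$ comes from rearranging $\d^X h_1(e) = z_e = \phi_1(e) - h_0(\d^Y e)$. You additionally spell out the degree-$0$ computation and the fact $f_1 = 0$, which the paper treats as immediate.
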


\begin{proof}
We need to verify $$\phi_i-(f_{x_0})_i = \d^X h_i + h_{i-1}\d^Y$$ for all cells of the appropriate degree. In degree zero, $h_{-1}$ is the zero map, so we can verify immediately that $\phi_0(y) - x_0 =  \d^X h_0(y)$.

In degree 1, we need to verify for each 1-cell $e$ in $Y$ that $$\phi_1(e) = \d^X h_1(e) + h_{0}(\d^Ye).$$ 
Let $e = (u,v)$.
Recall that in the construction of $h_1$, we defined $h_1(e)$ to be a chain $A_e\in C_2(X;\Z)$ satisfying $\d^X h_1(e) = \d^X A_e = \phi_1(e)-h_0(\d^Y e).$ Therefore, the right-hand side of the equation above becomes $$\d^Xh_1(e) + h_0(\d^Ye) = \phi_1(e) - h_0(\d^Ye) + h_0(\d^Ye) = \phi_1(e),$$ as desired.
\end{proof}

The following is immediate from the definition of chain homotopy and the previous proposition.

\begin{lem}
A tight partial contraction $h_i:C_i(Y;\Z)\to C_{i+1}(X;\Z)$ of a polygonal map $\phi:Y\to X$ is a chain homotopy between $\phi$ and the corresponding constant map if and only if for every 2-cell $A$ one has
$$
\phi_2(A) -h_{1}(\d^Y A)=0.
$$
\end{lem}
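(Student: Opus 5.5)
Since $C_3(X;\Z)=0$ and $h_2=0$, the only remaining chain homotopy condition is the one in degree $2$. We combine this with Proposition~\ref{prop:part-contr}, which already gives the conditions in degrees $0$ and $1$.

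For a chain homotopy between $\phi$ and $f_{x_0}$ we need
$$\phi_i-(f_{x_0})_i=\d^X h_i+h_{i-1}\d^Y$$
in every degree $i$. Since $Y$ is a 2-complex, only $i=0,1,2$ are relevant; in degrees $i\geq 3$ both sides vanish because the chain groups are zero. By Proposition~\ref{prop:part-contr}, the identity holds in degrees $0$ and $1$ for any tight partial contraction. So it suffices to show that the degree-$2$ identity is equivalent to the displayed condition on every 2-cell.

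In degree $2$, first compute the constant map: $(f_{x_0})_2=0$, since $f_{x_0}$ collapses every 2-cell to a point, so its cellular chain map kills all positive-degree cells. Next, $h_2=0$ by construction and $C_3(X;\Z)=0$, so $\d^X h_2=0$. The degree-$2$ identity therefore reduces to
$$\phi_2(A)=h_1(\d^Y A)$$
for all $A\in C_2(Y;\Z)$. Both sides are linear in $A$, so this holds for all chains if and only if it holds on the basis of 2-cells. That is exactly the stated condition $\phi_2(A)-h_1(\d^YA)=0$ for every 2-cell $A$, which proves both directions at once.

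The only point needing care, and it is minor, is the claim that $(f_{x_0})_i=0$ for $i\geq 1$. The constant map is cellular but not polygonal, so here one uses the standard convention that the induced cellular chain map is zero on cells whose image lies in a lower-dimensional skeleton. With that convention, the remaining verification is purely formal.
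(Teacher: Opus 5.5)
Your proof is correct and follows the paper's route: the paper simply says the lemma is immediate from the definition of chain homotopy and Proposition~\ref{prop:part-contr}. As you do, degrees $0$ and $1$ come from that proposition, and the degree-$2$ identity reduces to $\phi_2(A)=h_1(\d^Y A)$ because $h_2=0$ and $(f_{x_0})_2=0$. The convention $(f_{x_0})_i=0$ for $i\geq 1$ that you flag is the same one the paper tacitly uses in its degree-$1$ verification.
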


We are interested in what happens when the tight partial contraction does not come from a chain homotopy to the constant map. The following lemma is the main technical tool in this paper.

\begin{lem}\label{lem:2-cell-homological}
Let $\phi:Y\to X$ be a degree $d$ polygonal covering map of finite 2-complexes with $H_2(X;\Z)\neq0$. Then for any tight partial contraction $h_i:C_i(Y;\Z)\to C_{i+1}(X;\Z)$ for $\phi$, there exist 2-cells $\tilde \sigma_1,\dots,\tilde \sigma_b$ that project to cells in the support of 2-cycles in $X$, such that $$\{\phi_2(\tilde\sigma_i)-h_1(\d^Y\tilde\sigma_i)\}_1^b$$ span a full rank subgroup of $H_2(X;\Z)$.
\end{lem}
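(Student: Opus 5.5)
We plan to compare $\phi_2$ with the ``defect'' map $\delta:D_2\to C_2$ given by $\delta(A)=\phi_2(A)-h_1(\d^Y A)$. First we check that $\delta$ takes values in $Z_2(X;\Z)=H_2(X;\Z)$. By Proposition \ref{prop:part-contr}, for any 2-cell $A$ of $Y$ we have
\[
\d^X h_1(\d^Y A)=\phi_1(\d^Y A)-h_0(\d^Y\d^Y A)=\phi_1(\d^Y A)=\d^X\phi_2(A).
\]
Hence $\d^X\delta(A)=0$, and $\delta:D_2\to H_2(X;\Z)$ is a homomorphism. It then suffices to show that the image of $\delta$, restricted to 2-cells lying over the support of 2-cycles, has full rank. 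We can then extract $b=\rank H_2(X;\Z)$ cells whose images span a full rank subgroup.

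The key step is a transfer argument. Since $\phi$ is a degree $d$ covering, there is a transfer chain map $\tau:C_*(X)\to D_*(Y)$ sending each cell to the sum of its $d$ lifts. It satisfies $\phi_*\circ\tau=d\cdot\mathrm{id}$ and commutes with boundaries. Take $S\in Z_2(X;\Z)$. Then $\tau(S)\in Z_2(Y;\Z)$, so $\d^Y\tau(S)=0$, and therefore
\[
\delta(\tau(S))=\phi_2(\tau(S))-h_1(0)=dS.
\]
Thus $\delta(\tau(Z_2(X;\Z)))\supseteq d\cdot H_2(X;\Z)$, which has full rank because $H_2(X;\Z)\neq 0$ is a free abelian group of rank $b$.

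The chains $\tau(S)$ are supported on lifts of cells in the support of $S$, so they are supported on cells projecting into the support of 2-cycles. By linearity, each $\delta(\tau(S))$ is an integer combination of the values $\delta(\tilde\sigma)$ for such cells $\tilde\sigma$. Choose a $\Z$-basis $S_1,\dots,S_b$ of $H_2(X;\Z)$. Then the span of $\{\delta(\tilde\sigma)\}$, over cells $\tilde\sigma$ projecting into the support of 2-cycles, contains $dS_1,\dots,dS_b$, so it has rank $b$ inside the rank $b$ group $H_2(X;\Z)$. Finally, from a spanning family of vectors of rank $b$ in $H_2(X;\Q)$ we select $b$ of them that are linearly independent, namely $\delta(\tilde\sigma_1),\dots,\delta(\tilde\sigma_b)$. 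These span a full rank subgroup, which is the claim.

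We do not expect a serious obstacle. The only point needing care is the check that $\delta$ lands in cycles. This uses the degree 1 chain homotopy identity from Proposition \ref{prop:part-contr} together with $\d\d=0$, and it is where the tightness and filling hypotheses enter, through the existence of $h_1$. The transfer identity $\phi_2\circ\tau=d$ holds because $\phi$ is a polygonal covering: each cell has exactly $d$ lifts, each mapped homeomorphically onto it. We orient each lift compatibly with its image so that signs are preserved.
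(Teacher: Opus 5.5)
Your proposal is correct and follows essentially the same argument as the paper. The defect map $\tilde\sigma\mapsto\phi_2(\tilde\sigma)-h_1(\d^Y\tilde\sigma)$ (the paper's $\fc$) lands in $Z_2(X;\Z)$ by Proposition~\ref{prop:part-contr}, and evaluating it on transfers $\tau(S)$ of 2-cycles gives $dS$. Its values on lifts of cells in supports of 2-cycles therefore span a full-rank subgroup, and your choice of a basis $S_1,\dots,S_b$ with extraction of $b$ linearly independent values just spells out the final step the paper leaves implicit.
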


\begin{proof}

     Consider a nonzero 2-cycle $S=\sum_{\sigma\in X^{2}} a_\sigma \sigma$ in $H_2(X;\Z)$ and define $\tilde S=\sum_{\tilde\sigma\in Y^2} a_{\tilde \sigma}\tilde\sigma,$ where $a_{\tilde\sigma}=a_{\phi_2(\tilde\sigma)}$; this is the image of $S$ under the transfer map. 
     The transfer map takes cycles to cycles, so $\tilde S$ is a 2-cycle, and thus $$h_1(\d^Y \tilde S)=h_1(0)=0.$$

Denote by $\fc:C_2(Y;\Z)\to C_2(X;\Z)$ the map induced by $$\fc(\tilde\sigma) =\phi_2(\tilde \sigma) -h_{1}(\d^Y\tilde \sigma).$$ First note that the partial chain homotopy ensures this is always a cycle:
\begin{align*}
    \d^X \fc(\tilde\sigma) &=\d^X \phi_2(\tilde \sigma) -\d^Xh_{1}(\d^Y\tilde \sigma)\\
    &=\d^X \phi_2(\tilde \sigma) - (\phi_1(\d^Y \tilde \sigma)-h_0(\d^Y\d^Y\tilde\sigma)) \text{, by Proposition \ref{prop:part-contr}}\\
    &=\d^X \phi_2(\tilde \sigma) - \phi_1(\d^Y \tilde \sigma)\\
    &=0.
\end{align*}
Next, observe that $$\fc(\tilde S) = \phi_2(\tilde S)-h_1(\d^Y \tilde S)  = \phi_2(\tilde S)=dS.$$
Now consider the span of the set $$\cB=\{\fc(\tilde\sigma)~:~\tilde\sigma \text{ lies in the support of a cycle in the image of the transfer map}\}.$$ Using our previous observation, we find that for every 2-cycle $S\in H_2(X;\Z),$ the cycle $dS$ is a linear combination of elements of $\cB$. It follows that there exist 2-cells $\tilde\sigma_1,\dots\tilde\sigma_b$ such that $\{\fc(\tilde\sigma_i)\}$ generate a full rank subgroup of $H_2(X;\Z),$ and these 2-cells can be taken to project to cells in the support of 2-cycles in $X$, as claimed.
\end{proof}

We next record some quantitative properties of tight partial contractions associated to a polygonal map.

\begin{prop}\label{prop:quant-partial-contraction}
Let $\phi:Y\to X$ be a polygonal map where $X$ and $Y$ are 2-complexes. Let $h_i$ be a tight partial contraction over $\Z$.
\begin{enumerate}
    \item For any vertex $u$ in $Y$, there is a bound $||h_0(u)||\leq \diam(Y^{(1)})$.
    \item For any edge $e$ in $Y$, there is a bound
    $$
    ||h_1(e)|| \leq \frac{2\diam(Y^{(1)})+1}{\rho_1(X;\Z)}.
    $$
\end{enumerate}
\end{prop}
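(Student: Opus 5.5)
Both bounds come straight from the construction of the tight partial contraction, using only that $\phi$ is polygonal (so it sends each edge to a single edge, up to sign, and never increases $\ell^1$ norm on $1$-chains) and the definition of $\rho_1(X;\Z)$.

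\emph{Bound on $h_0$.} By definition $h_0(u)=\phi_1(c_u)$, where $c_u$ is the unique path in $\cT_Y$ from $y_0$ to $u$. As a chain, $c_u$ is a sum of distinct edges with coefficients $\pm1$. Hence $\norm{c_u}=d_{\cT_Y}(y_0,u)$, and this is at most $\diam(Y^{(1)})$ by the choice of spanning tree in step 2. Since $\phi_1$ sends each edge to $\pm$ an edge, the triangle inequality gives $\norm{\phi_1(c)}\le\norm{c}$ for every $1$-chain $c$; cancellation in the image can only lower the norm. So $\norm{h_0(u)}\le\diam(Y^{(1)})$.

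\emph{Bound on $h_1$.} For an edge $e=(u,v)$ we have $z_e=\phi_1(e)-h_0(v)+h_0(u)$. By the triangle inequality and part 1,
\[
\norm{z_e}\le 1+2\diam(Y^{(1)}).
\]
If $z_e=0$, then the minimal filling is $A_e=0$ and the bound is trivial. Otherwise $z_e$ is a nonzero exact cycle, since $\phi$ is filling. The definition of $\rho_1(X;\Z)$ as an infimum over nonzero exact cycles then gives
\[
\rho_1(X;\Z)\le \frac{\norm{z_e}}{\fill_1(z_e;\Z)}.
\]
Because $h_1(e)=A_e$ was chosen norm minimizing, $\norm{A_e}=\fill_1(z_e;\Z)$. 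Combining these,
\[
\norm{h_1(e)}\le \frac{\norm{z_e}}{\rho_1(X;\Z)}\le\frac{2\diam(Y^{(1)})+1}{\rho_1(X;\Z)}.
\]

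The argument is routine, and its only delicate points are edge cases. One must treat $z_e=0$ separately, as above. One must also assume $\rho_1(X;\Z)>0$. For finite $X$ this holds, because an integral filling has norm at least $1$ times a bounded factor, or more directly because the infimum over the finitely many nonzero exact cycles of each norm is attained. If instead $\rho_1(X;\Z)=0$, we read the right-hand side as $+\infty$.
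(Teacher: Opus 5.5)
Your proof is correct and follows the paper's argument: you bound $\norm{h_0(u)}$ by the tree-path length, bound $\norm{z_e}\le 2\diam(Y^{(1)})+1$ by the triangle inequality, set aside the case $z_e=0$, and compare the norm-minimizing filling $A_e$ with $\rho_1(X;\Z)$. One quibble with your aside: attaining a minimum among the finitely many cycles of each fixed norm does not by itself give $\rho_1(X;\Z)>0$, since those minima could still tend to $0$ as the norm grows (positivity instead follows from a linear isoperimetric bound, e.g.\ via a $\Z$-basis of $\d C_2(X;\Z)$ and equivalence of norms), but you also cover $\rho_1(X;\Z)=0$ by reading the bound as $+\infty$, so nothing in the proof depends on it.
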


\begin{proof}
    The first estimate is immediate from the definition of $h_0$ using a tree and the fact the map does not increase the length of paths. 
    The second estimate follows from the filling construction of $h_1$. First note that if $z_e$ is trivial, $h_1(e)=0$, and the inequality is trivial. We therefore can assume $z_e\neq0.$ Then, by construction, $||h_1(e)|| = \fill_1(z_e,\Z)$. Hence, 
$$    \rho_1(X;\Z)\leq  \frac{||z_e||}{||h_1(e)||}$$
The definition of $z_e$ using the spanning tree in $Y$ and the fact the map is polygonal, so does not increase lengths, immediately gives $$\norm{z_e}\leq 2\diam(Y^{(1)})+1.$$
Therefore $$ \rho_1(X;\Z)\leq \frac{2\diam(Y^{(1)})+1}{\norm{h_1(e)}}$$ and rearranging gives the claim.
\end{proof}

We can now prove our main geometric estimate.

\begin{thm}\label{thm:filling-contraction-and-diameter-bound}
Let $\phi:Y\to X$ be a polygonal covering map where $X$ and $Y$ are finite 2-complexes, $\phi$ is filling over $\Z$, and assume $H_2(X;\Z)\neq0$. Let $h_i$ be a tight partial contraction for $\phi$ over $\Z$. Then 
$$
\xi_2(X;\Z) \leq \kappa_2(X;\Z)\frac{2\diam(Y^{(1)})+1}{\rho_1(X;\Z)}+1.
$$
\end{thm}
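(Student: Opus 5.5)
The plan is to combine Lemma \ref{lem:2-cell-homological} with the norm bounds of Proposition \ref{prop:quant-partial-contraction}. By Lemma \ref{lem:2-cell-homological} there are 2-cells $\tilde\sigma_1,\dots,\tilde\sigma_b$ of $Y$, each projecting to a cell in the support of some 2-cycle of $X$, such that the 2-cycles
$$\fc(\tilde\sigma_i)=\phi_2(\tilde\sigma_i)-h_1(\d^Y\tilde\sigma_i)$$
span a full rank subgroup of $H_2(X;\Z)=Z_2(X;\Z)$. Here $b=\rank H_2(X;\Z)$. By the definition of $\xi_2(X;\Z)$ as an infimum over generating sets of rank $b$ submodules, it then suffices to bound each $\norm{\fc(\tilde\sigma_i)}$ by the right-hand side of the claimed inequality.

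For the norm estimate, I would use the triangle inequality:
$$\norm{\fc(\tilde\sigma_i)}\leq \norm{\phi_2(\tilde\sigma_i)}+\norm{h_1(\d^Y\tilde\sigma_i)}.$$
The first term equals $1$, since $\phi$ is polygonal and sends a 2-cell to a single 2-cell. For the second term, write $\d^Y\tilde\sigma_i$ as a signed sum of the edges in the boundary of $\tilde\sigma_i$, with multiplicity. There are $\len(\d\tilde\sigma_i)$ such edges. Since $\phi$ is a polygonal covering, $\len(\d\tilde\sigma_i)=\len(\d\phi_2(\tilde\sigma_i))$. The cell $\phi_2(\tilde\sigma_i)$ lies in the support of a 2-cycle of $X$, so this length is at most $\kappa_2(X;\Z)$. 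Applying part 2 of Proposition \ref{prop:quant-partial-contraction} to each edge and using linearity of $h_1$ gives
$$\norm{h_1(\d^Y\tilde\sigma_i)}\leq \kappa_2(X;\Z)\,\frac{2\diam(Y^{(1)})+1}{\rho_1(X;\Z)}.$$
Combining the two terms yields the stated bound on $\max_i\norm{\fc(\tilde\sigma_i)}$, and hence on $\xi_2(X;\Z)$.

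The main point needing care is the passage from ``the $\fc(\tilde\sigma_i)$ span a full rank subgroup'' to a bound on $\xi_2$. Under the paper's definition, $\xi_2(X;\Z)$ is the infimum of $\max_j\norm{S_j}$ over families $S_1,\dots,S_b$ generating a rank-$b$ submodule. Once we extract $b$ linearly independent elements from among the $\fc(\tilde\sigma_i)$, this is exactly that setting. In lattice terms, $\xi_2$ is the largest successive minimum, and the successive minima are bounded by the norms of any $b$ independent lattice vectors. So I would first extract a linearly independent subfamily of size $b$ and then apply the definition.

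A minor degenerate case is when $\rho_1(X;\Z)$ is an infimum over an empty set, which happens if there are no nonzero exact cycles. In that case every $z_e=0$, so $h_1=0$, and the bound $\xi_2\le 1$ holds directly.
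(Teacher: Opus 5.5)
Your proposal is correct and follows essentially the same route as the paper. You invoke Lemma \ref{lem:2-cell-homological}, bound $\norm{\phi_2(\tilde\sigma_i)}$ by $1$, bound $\norm{h_1(\d^Y\tilde\sigma_i)}$ edge by edge using Proposition \ref{prop:quant-partial-contraction} and $\kappa_2(X;\Z)$, and conclude from the full-rank property; the paper does the same, bounding only the cycle of maximal norm, which is equivalent. Your extra remarks on extracting an independent subfamily and on the degenerate case with no nonzero exact $1$-cycles are valid details that the paper leaves implicit.
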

\begin{proof} Let $\cA=\{\tilde\sigma_1,\dots,\tilde\sigma_b\}$ be the set of 2-cells given by Lemma \ref{lem:2-cell-homological}. Let $A \in\cA $ be such that $||h_1(\d^Y A) - \phi_2(A) ||$ is maximized over all 2-cells in $\cA;$ set $S= h_1(\d^Y A) - \phi_2(A)$. It follows from the fact the cycles $\{h_1(\d^Y\tilde\sigma_i)-\phi_2(\tilde\sigma_i)\}_{1}^b$ spans a full rank subset of $H_2(X;\Z)$, that $$\xi_2(X;\Z) \leq ||S||.$$  
Write $\d A = \sum a_ie_i$, so that $h_1(\d A) = \sum a_ih_1(e_i)$.
By Proposition \ref{prop:quant-partial-contraction}, we can estimate $$||h_1(\d A) ||\leq \sum ||a_ih_1(e_i)||\leq \sum |a_i|\frac{2\diam(Y^{(1)})+1}{\rho_1(X;\Z)}\leq\kappa_2(X;\Z)\frac{2\diam(Y^{(1)})+1}{\rho_1(X;\Z)}.$$ 
Therefore,
$$
||S||  = ||h_1(\d A) - \phi_2(A) || \leq ||h_1(\d A)||+||A|| \leq \kappa_2(X;\Z)\frac{2\diam(Y^{(1)})+1}{ \rho_1(X;\Z)}+1,$$
 where we applied Proposition \ref{prop:quant-partial-contraction} to each summand of $\d A$ in the final step. 
\end{proof}

Torsion bounds will come from estimating diameters of universal abelian covers in the next section. The following proposition puts universal abelian covers into the context of filling maps and partial contractions.

\begin{prop}\label{prop:uni-cover-filling}
Let $X$ be a finite 2-complex with trivial first Betti number.
Let $\phi:Y\to X$ be the universal abelian cover; then $\phi$ is filling over $\Z$ so admits a tight partial contraction. 
\end{prop}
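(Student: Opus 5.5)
The proof reduces to showing that $\phi_*:H_1(Y;\Z)\to H_1(X;\Z)$ vanishes when $Y$ is the universal abelian cover. The existence of a tight partial contraction then follows at once from the construction given above: steps (1)--(4) need only a spanning tree, and step (6) needs precisely that each $z_e$ is $\Z$-exact.

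First I set up the cover. Since $b_1(X)=0$, the group $H_1(X;\Z)=\pi_1(X)^{ab}$ is finite. The universal abelian cover $\phi:Y\to X$ corresponds to the kernel $K$ of the abelianization $\pi_1(X)\to H_1(X;\Z)$. This is a finite regular cover of degree $\#H_1(X;\Z)$. With the lifted cell structure, $Y$ is a finite polygonal 2-complex and $\phi$ is polygonal.

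Next I check that $\phi_*$ is zero on $H_1$. Since $Y$ is connected (I pass to the component of $X$ if needed), the Hurewicz map $\pi_1(Y)\to H_1(Y;\Z)$ is surjective. So every class in $H_1(Y;\Z)$ is represented by a loop $\gamma$ based at $y_0$, and $[\gamma]$ lies in $\pi_1(Y,y_0)$. Under $\phi_*$, this loop maps to an element of $K$. By naturality of the Hurewicz map, its image in $H_1(X;\Z)$ is the abelianization of an element of $K$, which is trivial. Hence $\phi_*=0$ on $H_1(Y;\Z)$, and by definition $\phi$ is filling.

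Finally I translate this to the cellular chain level, which is what the construction of $h_1$ uses. For any cellular $1$-cycle $z\in Z_1(Y;\Z)$, the class of $\phi_1(z)$ in cellular homology $H_1(X;\Z)=Z_1/\d C_2$ equals $\phi_*[z]=0$. So $\phi_1(z)\in\d C_2(X;\Z)$. In particular each $z_e=\phi_1(e-c_v+c_u)$ has an integral filling. A norm-minimizing one exists because $\fill_1(\cdot;\Z)$ is attained, and this defines $h_1$.

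I do not expect a real obstacle. The only point needing care is identifying cellular $H_1$ with $\pi_1^{ab}$ compatibly with the covering map, which is standard.
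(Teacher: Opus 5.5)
Your proof is correct and follows essentially the same route as the paper, which argues that a homologically nontrivial loop in $X$ lifts to a non-closed path in the universal abelian cover, so the image of any $1$-cycle of $Y$ is nullhomologous. This is the contrapositive of your Hurewicz-naturality argument, and your explicit use of Hurewicz surjectivity (to reduce cycles to loops) and of the natural cellular--singular identification makes precise steps the paper leaves implicit.
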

\begin{proof}
    Because $X$ has trivial first Betti number, $Y$ is finite. Because $Y$ is the universal abelian cover, any homologically nontrivial cycle $z$ in $Z_1(X;\Z)$ defines a nontrivial deck transformation and lifts to a non-closed curve. Hence, the image of any 1-cycle in $Y$ into $X$ must be nullhomologous, which implies the map $\phi_*:H_1(Y;\Z)\to H_1(X;\Z)$ is zero and therefore $\phi$ is filling.
 \end{proof}

\section{Diameter bounds and torsion}

We obtain bounds on torsion in homology using tight partial contractions of the universal abelian covering map. The key is to bound the diameter term in Theorem \ref{thm:filling-contraction-and-diameter-bound} with something involving the size of the first homology (which is exactly the degree of the covering map).

An interesting class of complexes to consider are those whose fundamental groups have property $(\tau)$, as the diameters of their finite index covers grow logarithmically in the covering degree and all finite index normal subgroups are rationally perfect. See \cite{LZ} for a general reference on property $(\tau).$

The following is standard; see for instance Proposition 5.24 in \cite{LZ}.
\begin{lem}\label{lem:tau}
    Let $X$ be a 2-complex with fundamental group $\G$ with property $(\tau)$. Let $Y\to X$ be a regular finite cover of degree $d$. Then $$\diam(Y^{(1)}) \leq  C(1+\log d)$$ for a constant $C$ depending only on $X.$
\end{lem}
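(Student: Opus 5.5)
The plan is to derive the lemma from the spectral gap that property $(\tau)$ provides, using the usual link between expansion and logarithmic diameter. Write $\G=\pi_1X$ and fix the finite generating set $S$ coming from the edges of $X$: choose a maximal tree in $X^{(1)}$, so that the remaining edges give generators. Since $Y\to X$ is regular of degree $d$, it corresponds to a finite index normal subgroup $N\normalsub\G$ with $[\G:N]=d$. The deck group $Q=\G/N$ acts simply transitively on the fibre over each vertex. The plan is to compare $Y^{(1)}$ with the Cayley graph $\mathrm{Cay}(Q,\bar S)$, and then use the fact that property $(\tau)$ makes the graphs $\mathrm{Cay}(\G/N,\bar S)$ a family of $\epsilon$-expanders, with $\epsilon>0$ depending only on $(\G,S)$ and hence only on $X$.

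First, the comparison. Fix a vertex $x_0$ of $X$. The preimage of $x_0$ in $Y$ is in bijection with $Q$. Two lifts $\tilde x_0$ and $g\tilde x_0$ with $g=\bar s$ for some $s\in S$ are joined in $Y$ by a lift of the loop representing $s$. That loop is the edge together with tree paths, so its length is at most $L:=2\diam(\text{maximal tree})+1$, a constant depending only on $X$. Hence
\[
d_{Y^{(1)}}(\tilde x_0,g\tilde x_0)\le L\,|g|_{\bar S}.
\]
Every vertex of $Y$ is within distance $\diam(\text{tree})$ of some lift of $x_0$. Therefore $\diam(Y^{(1)})\le L\,\diam\mathrm{Cay}(Q,\bar S)+2\diam(\text{tree})$.

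Second, the expander diameter bound. If a $k$-regular graph on $d$ vertices has vertex-expansion constant $\epsilon$, meaning $|\partial F|\ge\epsilon|F|$ for every vertex set $F$ with $|F|\le d/2$, then balls satisfy $|B(r+1)|\ge(1+\epsilon)|B(r)|$ as long as $|B(r)|\le d/2$. So after $r_0=\lceil\log(d/2)/\log(1+\epsilon)\rceil$ steps, each of two balls around any pair of vertices contains more than $d/2$ vertices. The two balls then intersect, which gives diameter at most $2r_0+2$. Combining this with the first step yields $\diam(Y^{(1)})\le C(1+\log d)$, where $C$ depends only on $X$ through $L$, the tree, and $\epsilon$.

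The main obstacle is the input that $(\tau)$ gives a uniform expansion constant for the quotient Cayley graphs $\mathrm{Cay}(\G/N,\bar S)$ with respect to the fixed generating set. This is the content of the standard equivalence between property $(\tau)$ and uniform spectral gap / Kazhdan constant for representations factoring through finite quotients; see Proposition 5.24 in \cite{LZ}, which the paper cites. I would quote it rather than reprove it. If needed, one can pass from the spectral gap $\lambda_1\ge\epsilon'$ to edge expansion via the easy direction of Cheeger's inequality, $h\ge\lambda_1/2$, and from edge expansion to vertex expansion by dividing by the degree $|S|$.
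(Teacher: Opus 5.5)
Your proposal is correct and follows essentially the same route as the paper, which gives no argument beyond citing the standard fact (Proposition 5.24 in \cite{LZ}) that property $(\tau)$ makes the Cayley graphs $\mathrm{Cay}(\G/N,\bar S)$ a uniform expander family, hence of diameter $O(\log d)$; you have unpacked that citation, including the comparison between $Y^{(1)}$ and $\mathrm{Cay}(\G/N,\bar S)$ via lifts of a maximal tree. Like the paper, you implicitly use that $X$ is finite, so that the tree has finite diameter and $S$ is a finite generating set.
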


With this, we can prove our main theorem.

\begin{thm}\label{thm: tau}
    Let $X_0$ be a finite 2-complex whose fundamental group $\G_0$ has property $(\tau)$. Let $\G\normalsub \G_0$ be a finite index normal subgroup. Let $X =\widetilde X_0/\G$. Then there is a constant $C$ depending only on $X_0$ such that  $$
(\xi_2(X;\Z)-1)\rho_1(X;\Z)-C(1+\log([\G_0:\G])) \leq  C\log\#\G^{ab}.
$$ In particular, for a residual tower of covers $X_i\to X_0,$ we have $$\xi_2(X_i;\Z)\rho_1(X_i;\Z)-o([\G_0:\G_i])\leq C\log\#\G^{ab}_i.$$
\end{thm}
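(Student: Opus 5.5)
Apply Theorem \ref{thm:filling-contraction-and-diameter-bound} to the universal abelian cover $\phi:Y\to X$. Since $\G_0$ has property $(\tau)$, every finite index subgroup $\G$ has finite abelianization, so $b_1(X)=0$. Proposition \ref{prop:uni-cover-filling} then says $\phi$ is filling over $\Z$, and $Y$ is a finite cover of degree $\#\G^{ab}$. If $H_2(X;\Z)=0$, then $\xi_2(X;\Z)=0$ and the left side of the claimed inequality is negative (once $C>0$), so there is nothing to prove. Otherwise Theorem \ref{thm:filling-contraction-and-diameter-bound} gives
$$(\xi_2(X;\Z)-1)\rho_1(X;\Z)\le \kappa_2(X;\Z)\,(2\diam(Y^{(1)})+1).$$
Two inputs are now needed: a uniform bound on $\kappa_2(X;\Z)$, and a logarithmic bound on $\diam(Y^{(1)})$.

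For the first input, $X\to X_0$ is a polygonal covering. Every 2-cell of $X$ maps homeomorphically to a 2-cell of $X_0$ with the same boundary length, so $\kappa_2(X;\Z)\le\kappa(X_0)$, a constant depending only on $X_0$.

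For the second input, observe that $Y=\widetilde X_0/[\G,\G]$. Since $[\G,\G]$ is characteristic in $\G$ and $\G$ is normal in $\G_0$, the subgroup $[\G,\G]$ is normal in $\G_0$. Hence $Y\to X_0$ is a regular finite cover of degree $[\G_0:\G]\cdot\#\G^{ab}$. Lemma \ref{lem:tau}, applied to $X_0$ rather than $X$ so that the constant depends only on $X_0$, gives
$$\diam(Y^{(1)})\le C_0\big(1+\log[\G_0:\G]+\log\#\G^{ab}\big).$$
Substituting both bounds yields
$$(\xi_2-1)\rho_1\le \kappa(X_0)\Big(2C_0\big(1+\log[\G_0:\G]+\log\#\G^{ab}\big)+1\Big).$$
Enlarging the constant to $C=\kappa(X_0)(2C_0+1)$ absorbs everything and gives exactly the stated inequality.

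For the residual tower, the first inequality gives
$$\xi_2\rho_1\le C\log\#\G_i^{ab}+C(1+\log d_i)+\rho_1(X_i;\Z),$$
where $d_i=[\G_0:\G_i]$. The term $C(1+\log d_i)$ is $o(d_i)$. It remains to check that $\rho_1(X_i;\Z)$ is bounded, which is the only potentially subtle step. I would bound it by a fixed constant as follows. Take a single edge $e$ of $X_i$ whose boundary cycle $\d\sigma$ of some 2-cell is nonzero: the ratio for $z=\d\sigma$ is at most $\len(\d\sigma)/\fill(z)\le\kappa(X_0)$ whenever $\fill(z)\ge1$, which holds for any nonzero integral chain. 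Hence $\rho_1(X_i;\Z)\le\kappa(X_0)$, a constant, which is certainly $o(d_i)$. (If $\d C_2=0$, then $\rho_1$ is an infimum over the empty set; in that degenerate case, convention or the hypothesis $H_2\neq0$ handles it.)
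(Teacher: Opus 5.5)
Your proof is correct and follows the paper's argument for the main inequality: the regular cover $\widetilde X^{ab}\to X\to X_0$ of degree $[\G_0:\G]\,\#\G^{ab}$, Lemma~\ref{lem:tau} applied over $X_0$, Theorem~\ref{thm:filling-contraction-and-diameter-bound} together with Proposition~\ref{prop:uni-cover-filling}, and a uniform bound on $\kappa_2$; you also spell out why $[\G,\G]\normalsub\G_0$ and dispose of the case $H_2(X;\Z)=0$, which the paper leaves implicit. The one difference is in the tower statement: the paper absorbs the leftover $\rho_1(X_i;\Z)$ term by citing Proposition 2.11 of \cite{RuddT} ($\rho_1(X_i;\Z)\to0$), whereas your elementary bound $\rho_1(X_i;\Z)\le\kappa(X_0)$, obtained from $z=\d\sigma$ and $\fill_1(z;\Z)\ge1$, already suffices for an $o([\G_0:\G_i])$ error and makes the argument self-contained.
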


\begin{proof}
     Because the composition $\widetilde X^{ab}\to X\to X_0$ is regular of degree $[\G_0:\G]\#H_1(X;\Z)$, we have by
          Lemma \ref{lem:tau} that $$\diam(\widetilde X^{ab})\leq C(1+\log ([\G_0:\G]\#H_1(X;\Z))),$$ where $C$ depends only on $X_0.$

 By Theorem \ref{thm:filling-contraction-and-diameter-bound} and Proposition \ref{prop:uni-cover-filling}, we can use this estimate to obtain
$$
\xi_2(X;\Z) \leq \kappa_2(X)\frac{2 C(1+\log ([\G_0:\G]\#H_1(X;\Z)))+1}{\rho_1(X;\Z)}+1.
$$
Now rearrange and note $\kappa_2(X)$ is uniformly bounded over all finite covers to obtain for some larger constant $C$ depending only on $X_0$
 $$
(\xi_2(X;\Z)-1)\rho_1(X;\Z) \leq C(1+\log ([\G_0:\G])) + C\log\#H_1(X;\Z).
$$

This immediately implies the first statement in the theorem. The second statement follows from the first combined with Proposition 2.11 in \cite{RuddT}, which says that if $X_i\to X_0$ are a residual tower of covers and $b_1(X_i)=0$ for all $i,$ then $\rho_1(X_i;\Z)\to 0$.
\end{proof}

Zung adapts the main result of Benjamini-Finucane-Tessera in \cite{BFT} to prove the following related diameter bound, which he uses for estimates analogous to those in Theorem \ref{thm:main} for Riemannian 3-manifolds.

\begin{lem}[\cite{Zung} Lemma 6.4]\label{lem:Zung}
For any $\delta>0$, there is a constant $C_\delta$ such that if $X$ is a metric space with $\#H_1(X)<\infty$ and $\widetilde X^{ab}$ is the cover of $X$ corresponding to the kernel of the map $ab:\pi_1X\to H_1(X)$, then
$$
\diam(\widetilde X^{ab}) \leq C_\delta \#H_1(X)^{\delta}\diam(X).
$$
\end{lem}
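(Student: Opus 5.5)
The plan is to prove the diameter bound by first building a nice generating set for $\pi_1(\widetilde X^{ab})$-level data. I will not attempt a new argument; I will reduce to the Benjamini–Finucane–Tessera theorem on diameters of Cayley graphs of finite groups. Put $A=H_1(X)$, a finite abelian group of order $n$, and let $D=\diam(X)$. Fix a basepoint $x_0$ and a lift $\tilde x_0$.

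\textbf{Step 1: reduce to a Cayley-graph problem.} Every point of $\widetilde X^{ab}$ lies within $D$ of some lift $a\cdot\tilde x_0$ with $a\in A$. It therefore suffices to bound $d(\tilde x_0,a\tilde x_0)$ uniformly by $C_\delta n^\delta D$, up to an additive $2D$. By a standard Švarc–Milnor-type argument at scale comparable to $D$, the set
$$S=\{a\in A : d(\tilde x_0,a\tilde x_0)\le 3D\}$$
generates $A$. Moreover, $d(\tilde x_0,a\tilde x_0)\le 3D\,|a|_S$. To prove generation, take a path from $\tilde x_0$ to $a\tilde x_0$, subdivide it into pieces of length at most $D$, and project each subdivision point back to within $D$ of an orbit point. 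Consecutive orbit points then differ by elements of $S$. This step needs a length-space assumption, or an $\varepsilon$-chain version for general metric spaces, which I would add as a harmless hypothesis. The problem is now to bound $\diam\mathrm{Cay}(A,S)$ by $C_\delta n^\delta$.

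\textbf{Step 2: control the Cayley graph.} For abelian groups this step is elementary and is the heart of the matter. Order $S=\{s_1,\dots,s_k\}$ and consider the chain of subgroups $\langle s_1\rangle\subset\langle s_1,s_2\rangle\subset\cdots$, keeping only the strict increases. There are at most $\log_2 n$ of these. A naive bound would sum the index jumps and so bound the diameter by the sum of the factors, which can be as large as $n$. That is useless, so the argument must use the geometry: the $D$-balls.

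\textbf{Step 3: the key use of geometry (the main obstacle).} The extra input is a volume doubling or "no large balls without growth" property, as in BFT. Here is the intended argument. Let $B(r)$ be the ball of radius $r$ in $\mathrm{Cay}(A,S)$. If $\diam\ge R$, then along a geodesic we get $|B(r)|\ge r$ for every $r\le R/2$. The BFT-type dichotomy is then the following. Either $|B(3r)|\ge K|B(r)|$ at every scale, which forces $\diam\lesssim n^{\log 3/\log K}$, small once $K$ is large. Or at some scale the growth is polynomially bounded. In that case Breuillard–Green–Tao, which is elementary for abelian groups via Freiman/Ruzsa covering, shows that $B(r)$ is approximately a generalized arithmetic progression of bounded rank. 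Then the quotient by a bounded-rank progression has diameter controlled linearly, giving a contradiction for large $\diam$.

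I expect this trichotomy to be the hard part, and I would simply import it from \cite{BFT} in the abelian case rather than reprove it. Choosing $K=K(\delta)$ so that $\log 3/\log K<\delta$ yields $\diam\mathrm{Cay}(A,S)\le C_\delta n^\delta$. Combining this with Step 1 gives $\diam(\widetilde X^{ab})\le 3D\,C_\delta n^\delta+2D$. Absorbing constants (using $n\ge1$) completes the proof.
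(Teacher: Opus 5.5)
There is a genuine gap. After Step 1 you never use the hypothesis that $H_1(X)$ is finite, i.e.\ that $\widetilde X^{ab}$ is the \emph{maximal} abelian cover, and without that hypothesis the statement is false. The claim you reduce to is $\diam\mathrm{Cay}(A,S)\le C_\delta n^\delta$ for a finite abelian group $A$ of order $n$ and a generating set $S$. It already fails for $A=\Z/n$, $S=\{0,\pm1\}$, whose Cayley graph has diameter $\lfloor n/2\rfloor$.

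This pair really does come out of your Step 1. Take $X$ a circle of length $1$ and its $n$-fold cyclic cover. Then $D=1/2$, $S=\{0,\pm1\}$, and the cover has diameter $n/2\gg n^\delta D$. Steps 2--3 only use that $A$ is finite abelian and that $S$ generates it, so they cannot distinguish this example from the setting of the lemma.

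For the same reason, the dichotomy in Step 3 cannot end in a contradiction. \cite{BFT} does not say that large-diameter vertex-transitive graphs fail to exist; cycles and discrete tori do exist. It says they look like tori of bounded dimension at the scale of the diameter. In the abelian case this amounts, roughly, to a surjection $\chi:A\to\Z/m$ with $m$ large, under which every element of $S$ lands in a short interval $[-k,k]$ with $k\ll m$. Your phrase ``the quotient by a bounded-rank progression has diameter controlled linearly'' is such a description, not a contradiction.

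The missing step is to turn such a $\chi$ into a nonzero homomorphism $\pi_1X\to\Z$.
\begin{itemize}
\item Assume $X$ is a semilocally simply connected length space. Apply the \v{S}varc--Milnor argument to $\pi_1X$ acting on the universal cover $\widetilde X$, not merely to $A$ acting on $\widetilde X^{ab}$. This gives a presentation of $\pi_1X$ whose generators have displacement less than about $3D$ and whose relations are triangles $g_1g_2=g_3$ among generators.
\item Since $\widetilde X\to\widetilde X^{ab}$ is $1$-Lipschitz, each generator maps under $ab$ into $S$, and hence under $\chi\circ ab$ into $[-k,k]$.
\item Suppose $3k<m$. Send each generator to its integer representative in $[-k,k]$. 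For a triangle relation the three representatives satisfy $t_1+t_2-t_3\equiv 0 \pmod m$ and $|t_1+t_2-t_3|\le 3k<m$, so the relation holds exactly in $\Z$.
\item This defines a homomorphism $\pi_1X\to\Z$ whose reduction mod $m$ is the surjection $\chi\circ ab$. It is therefore nonzero, contradicting $\#H_1(X)<\infty$.
\end{itemize}
Equivalently, a torus-like large-scale structure on $\widetilde X^{ab}$ could be unwrapped to an abelian cover of $X$ strictly larger than $\widetilde X^{ab}$.

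This lifting step is where the hypothesis on $H_1(X)$ has to enter. It requires keeping the short relations of $\pi_1X$, which your reduction to $\mathrm{Cay}(A,S)$ discards. The paper gives no proof of its own; it imports the lemma from \cite{Zung}, who adapts \cite{BFT}. A corrected version of your outline would combine BFT-type structure for $(A,S)$ with the lifting argument above.
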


Note that the diameter used by Zung is the diameter of the 2-complex, not the 1-skeleton, but as this is comparable to the diameter of the 1-skeleton, up to universal constants, the distinction does not matter here. This then gives the following theorem; as the proof is essentially identical to that of Theorem \ref{thm: tau}, we omit it.

\begin{thm}\label{thm:main}
Let $X$ be a 2-complex with rationally perfect fundamental group $\G$. Then for any $\delta>0$, there is a constant $C_\delta>0$ depending only on $\delta$ such that
$$
\frac{\xi_2(X;\Z)-1}{\kappa_2(X;\Z)\diam(X)}\rho_1(X;\Z)\leq C_\delta(\#\G^{ab})^{\delta}.
$$
\end{thm}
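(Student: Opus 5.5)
The plan is to follow the proof of Theorem \ref{thm: tau} verbatim, replacing Lemma \ref{lem:tau} with Zung's diameter bound, Lemma \ref{lem:Zung}. Let $\phi:Y=\widetilde X^{ab}\to X$ be the universal abelian cover. Since $\G$ is rationally perfect, $b_1(X)=0$, so $H_1(X;\Z)\cong\G^{ab}$ is finite. By Proposition \ref{prop:uni-cover-filling}, $\phi$ is a finite polygonal covering map, filling over $\Z$, and admits a tight partial contraction.

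If $H_2(X;\Z)=0$, then $\xi_2(X;\Z)=0$. The left-hand side is then negative (or nonpositive), so the inequality holds trivially. Otherwise Theorem \ref{thm:filling-contraction-and-diameter-bound} applies and gives
$$(\xi_2(X;\Z)-1)\rho_1(X;\Z)\le \kappa_2(X;\Z)\bigl(2\diam(Y^{(1)})+1\bigr).$$

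Next I bound the diameter using Lemma \ref{lem:Zung}. For each $\delta>0$ it gives $\diam(\widetilde X^{ab})\le C_\delta(\#\G^{ab})^{\delta}\diam(X)$. As the paper notes, the 1-skeleton diameter and the diameter of the 2-complex are comparable up to universal constants, both for $Y$ and for $X$. So, after enlarging $C_\delta$ by a universal factor,
$$2\diam(Y^{(1)})+1\le C_\delta(\#\G^{ab})^\delta\diam(X)+1.$$

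To absorb the additive $+1$, I use $\diam(X)\ge$ some universal positive constant whenever $X$ has an edge. Indeed, $H_2\ne0$ forces a 2-cell, hence an edge of length $1$, and the diameter is at least of order one. I also use $(\#\G^{ab})^\delta\ge1$. This yields $2\diam(Y^{(1)})+1\le C'_\delta(\#\G^{ab})^\delta\diam(X)$. Dividing by $\kappa_2(X;\Z)\diam(X)$ then gives the claimed inequality.

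The main obstacle is purely bookkeeping: ensuring that the constant depends only on $\delta$. This requires two checks. First, the comparison between the path metric on the 1-skeleton and the metric on the polygonal 2-complex must be uniform; it is, since every 2-cell is a regular unit-edge polygon and any point lies within a bounded distance of the 1-skeleton. Some care is needed here, because a polygon with many sides has a large inradius, so I would phrase the comparison as applying between vertices. Second, the lower bound on $\diam(X)$ used to absorb the constant must hold; if needed, I would measure diameter via vertices to make this clean.
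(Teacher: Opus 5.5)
Your proposal is correct and is exactly the argument the paper intends: the paper omits the proof as ``essentially identical'' to that of Theorem \ref{thm: tau}, i.e.\ it combines Proposition \ref{prop:uni-cover-filling} and Theorem \ref{thm:filling-contraction-and-diameter-bound}, with Lemma \ref{lem:Zung} in place of Lemma \ref{lem:tau}. Your extra bookkeeping is sound, since only the direction $\diam(Y^{(1)})\lesssim\diam(Y)$ between vertices is needed. One small caution: to absorb the $+1$, use the genuine metric diameter of $X$, which is bounded below by a universal constant once $X$ has an edge, and not your fallback of a vertex-only diameter, which vanishes for one-vertex complexes such as presentation complexes.
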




\section{Heegaard diagrams and associated 2-complexes}\label{sec:Heegaard}

In this section we discuss certain 2-complexes associated to Heegaard splittings and list some properties motivated by the previous sections.

A \emph{Heegaard diagram} describes a 3-manifold $M$ as a triple $(\S,\ua,\ub)$, where $\S$ is a genus $g$ surface and $\ua = \cup_i\alpha_i$ and $\ub = \cup_i\beta_i$ are embedded multicurves with $g$ components such that $\S-\ua$ and $\S-\ub$ are connected.
From a diagram, one can associate a 3-manifold with boundary $S^2\sqcup S^2$ by thickening the surface $\S$, then attaching thickened disks along the components of $\ua$ and $\ub$. Filling in the two $S^2$'s by attaching 3-balls results in a closed 3-manifold $M$. We denote the closed 3-manifold built from this procedure by $M(\S,\ua,\ub)$.

A \emph{combinatorial Heegaard diagram} is a triple $(\S,\ta,\tb)$ where $\S$ is a triangulated surface and $\ta$ and $\tb$ are polygonal $g$-component multicurves such that $\ta$ and $\tb$ are homotopic to multicurves $\ua$ and $\ub$ with $\S-\ua$ and $\S-\ub$ connected.

We denote by $||\S||$ the number of 2-cells in the underlying triangulation of $\S$. Of course, every combinatorial Heegaard diagram comes from a Heegaard diagram, so we can as above associate to a combinatorial Heegaard diagram a closed 3-manifold $M(\S,\ta,\tb)$.

Let $X(\S,\ta,\tb)$ be the 2-complex obtained by attaching polygonal disks to $\S$ along each polygonal loop in $\ta$ and $\tb$. This 2-complex is homotopy equivalent to $M(\S,\ta,\tb)-(B^3_0\cup B^3_1)$, where $B^3_0$ and $B^3_1$ are any pair of embedded disjoint 3-balls.

\begin{prop}\label{prop:heeg}
The 2-complex $X = X(\S,\ta,\tb)$ associated to a combinatorial Heegaard diagram satisfies $\pi_1(X) \cong \pi_1(M(\S,\ta,\tb))$. When $b_1(X) = 0$, the 2-complex $X$ furthermore satisfies the following:
\begin{enumerate}
    \item $b_2(X)=1$.
    \item The 2-cycle complexity $\xi_2(X;\Z)$ is given by the number of 2-cells in the triangulation of $\S$:
    $$
    \xi_2(X;\Z) = ||\S||.
    $$
    \item $X$ is a fundamental 2-complex.
    \item The 2-cycle curvature of $X$ satisfies $\kappa_2(X;\Z) = 3$.
\end{enumerate}
\end{prop}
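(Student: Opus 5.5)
The plan is to compute the cellular chain complex of $X=X(\S,\ta,\tb)$ directly. The 2-cells of $X$ are the $\|\S\|$ triangles of $\S$ together with $2g$ disks $D_{\alpha_i},D_{\beta_i}$.

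For the fundamental group, van Kampen gives $\pi_1(X)=\pi_1(\S)/\langle\langle \alpha_i,\beta_i\rangle\rangle$. This matches $\pi_1(M)$, because attaching the two 3-balls does not change $\pi_1$. Alternatively one can use the stated homotopy equivalence with $M$ minus two balls and note that removing balls from a 3-manifold does not change $\pi_1$.

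For item 1, I would compute Euler characteristics. We have $\chi(X)=\chi(\S)+2g=2$. Since $b_1(X)=0$ and $b_0=1$, this gives $b_2=1$.

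For items 2 and 3, I would analyze integral 2-cycles. Take $Z=\sum_\tau c_\tau \tau+\sum_i (a_i D_{\alpha_i}+b_i D_{\beta_i})$. Its boundary restricted to any edge not lying on $\ta\cup\tb$ sees only the two adjacent triangles. Hence $c$ is locally constant across such edges, with signs governed by the orientation of $\S$. Since $\S-\ta$ is connected (and likewise the relevant complement), and more directly since $H_2(X)$ has rank one, I would argue as follows. Writing $[\S]=\sum \tau$ for the oriented sum of triangles, $\d[\S]=0$ in $\S$. So candidate cycles are $[\S]$ minus disks, adjusted so the disk boundaries cancel. The key point is that $\d$ restricted to triangles is the surface boundary, which vanishes on $[\S]$. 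So $[\S]$ itself is a 2-cycle in $X$, with all coefficients $\pm1$ on triangles and $0$ on disks.

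It remains to show that $[\S]$ generates $H_2(X;\Z)$ and that no cycle involves the disks. If $Z$ is a cycle, then $\d Z$ restricted to $\S$ gives $\d(\sum c_\tau\tau)=-\sum a_i\ta_i-\sum b_i\tb_i$ as 1-chains in $\S$. The class $\sum a_i[\alpha_i]+\sum b_i[\beta_i]$ is then zero in $H_1(\S)$. This forces every $a_i,b_i$ to vanish: the kernel of $\Z^{2g}\to H_1(\S)$ has rank $b_2(X)-1=0$ by rank counting in the Mayer–Vietoris or long exact sequence of the pair $(X,\S)$ together with $b_1(X)=0$. Indeed $H_1(\S)\to H_1(X)$ has finite-index-image kernel of rank $2g$, spanned by the $2g$ curves, so they are linearly independent. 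Then $\sum c_\tau\tau$ is a 2-cycle on the closed connected surface $\S$, hence an integer multiple of $[\S]$. This gives item 3 (fundamental) and $H_2=\Z[\S]$, so $\xi_2=\|[\S]\|=\|\S\|$.

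For item 4, the cells in the support of cycles are exactly the triangles, whose boundaries have length 3, so $\kappa_2=3$. The main obstacle I expect is the linear-independence step for the $2g$ curves, which should be handled cleanly via exactness of the rational homology sequence of $(X,\S)$.
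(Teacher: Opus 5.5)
Your proof is correct, but it reaches items 1--3 by a different, purely cellular route.

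\textbf{The paper's route.} It gets $b_2(X)=1$ from the homotopy equivalence $X\simeq M\setminus(B^3_0\cup B^3_1)$ together with 3-manifold topology: when $b_1=0$, $H_2$ is generated by a boundary sphere, which implicitly uses $H_2(M;\Z)=0$ via duality. It then identifies the generator by a primitivity observation. The chain $[\Sigma]$ is a cycle whose coefficients are all $\pm1$. In the rank-one free group $Z_2(X;\Z)$ it therefore cannot be a proper multiple of another cycle, so it generates. In particular no cycle touches the $2g$ disks, which gives items 2--4 at once.

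\textbf{Your route.}
\begin{itemize}
\item You get $b_2=1$ from $\chi(X)=\chi(\Sigma)+2g=2$, which needs no 3-manifold input at all.
\item You show directly that disk coefficients vanish, using independence of the $2g$ attaching classes in $H_1(\Sigma;\Q)$. This independence follows from $H_1(X;\Z)=H_1(\Sigma;\Z)/\langle[\alpha_i],[\beta_i]\rangle$ being finite.
\item You then reduce to $H_2(\Sigma;\Z)=\Z[\Sigma]$.
\end{itemize}

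\textbf{Comparison.} Your argument is self-contained. It applies verbatim to any complex obtained from a closed orientable genus-$g$ surface by attaching $2g$ disks with $b_1=0$, whether or not it comes from a Heegaard diagram. The paper's argument is shorter. Once you have $b_2=1$ from the Euler characteristic, your independence step is redundant: the paper's primitivity observation already yields $Z_2(X;\Z)=\Z[\Sigma]$.
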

\begin{proof}
Deleting a pair of 3-balls from $M$ does not change the fundamental group and by construction $X$ is homotopy equivalent to $M$ with a pair of 3-balls removed. When $b_1(X)=0$, the second homology is generated by one of the boundary components. The 2-chain in $C_2(X;\Z)$ corresponding to the triangulation of $\S$ is a primitive 2-cycle, so because we are in a 2-complex with second Betti number one it must be the generator. It follows immediately that $\xi_2(X;\Z) = ||\S||$ and as every 2-cell in $\S$ appears exactly once, the 2-complex is fundamental. The 2-cycle curvature is realized by any triangular face in $\S$, giving $\kappa_2(X;\Z) = 3$.
\end{proof}

Combining the above construction with our previous estimates gives the following interesting bound.

    \begin{prop}     Let $M$ be a closed 3-manifold with trivial first Betti number. Fix a combinatorial Heegaard splitting $(\S,\ta,\tb)$ of $M$ and let $X$ be the associated 2-complex. 

    Then for every $\delta>0$, there is a constant $C_\delta$ depending only on the comparison between triangulation complexity of $\S$ and the genus of $\S$ such that $$\frac{\genus(\S)\cdot\rho_1(X;\Q)}{\diam(X)}\leq C_\delta\#H_1(M;\Z)^{\delta}.$$
        
    \end{prop}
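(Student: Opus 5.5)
The plan is to apply Theorem \ref{thm:main} to the 2-complex $X=X(\S,\ta,\tb)$ and then rewrite each quantity using Proposition \ref{prop:heeg} and Proposition \ref{lem:median}.

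First, the hypotheses. Since $b_1(M)=0$ and $\pi_1(X)\cong\pi_1(M)$ by Proposition \ref{prop:heeg}, the group $\G=\pi_1(X)$ is rationally perfect. Moreover $\#\G^{ab}=\#H_1(M;\Z)$. Theorem \ref{thm:main} then gives
$$
\frac{\xi_2(X;\Z)-1}{\kappa_2(X;\Z)\diam(X)}\rho_1(X;\Z)\leq C_\delta\#H_1(M;\Z)^{\delta}.
$$

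Next, substitute each term. By Proposition \ref{prop:heeg}, $\kappa_2(X;\Z)=3$ and $\xi_2(X;\Z)=||\S||$. Also $X$ is fundamental, so Proposition \ref{lem:median} gives $\rho_1(X;\Z)=\rho_1(X;\Q)$. The left side therefore becomes $\frac{(||\S||-1)\rho_1(X;\Q)}{3\diam(X)}$.

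It remains to compare $||\S||-1$ with $\genus(\S)$, and this is where the dependence of the constant on ``the comparison between triangulation complexity of $\S$ and the genus'' enters. Any triangulation of a genus $g$ surface has $||\S||=2V-2\chi=2V+4g-4\geq 4g-2$ faces, using $3F=2E$ and $V-E+F=2-2g$. Hence $||\S||-1\geq 4g-3\geq g$ once $g\geq1$. If $g=0$ the left-hand side of the claim is $0$ and there is nothing to prove. So in fact $\genus(\S)\leq ||\S||-1$ holds with a universal constant.

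Putting this together gives $\genus(\S)\rho_1(X;\Q)/\diam(X)\leq 3C_\delta\#H_1(M;\Z)^\delta$, and we rename the constant. The stated dependence on the triangulation-versus-genus comparison is only needed if one wants the reverse comparison. I expect no real obstacle: the only substantive inputs are Theorem \ref{thm:main} and the coefficient upgrade, and the single point to check is that Theorem \ref{thm:main}, which Zung's Lemma \ref{lem:Zung} applies to any finite complex, is valid here with $\diam(X)$ measured as in that lemma.
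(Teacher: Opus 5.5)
Your proposal is correct and follows the paper's argument: apply Theorem \ref{thm:main}, substitute $\xi_2(X;\Z)=||\S||$ and $\kappa_2(X;\Z)=3$ from Proposition \ref{prop:heeg}, and replace $\rho_1(X;\Z)$ by $\rho_1(X;\Q)$ via Proposition \ref{lem:median}. Your Euler characteristic count $||\S||=2V+4g-4\geq 4g-2$ spells out what the paper leaves to ``combine constants,'' and it correctly shows that the constant $C_\delta$ does not actually need to depend on the triangulation-versus-genus comparison; it can be taken universal.
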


    \begin{proof}
     Combine Proposition \ref{prop:heeg} with Theorem \ref{thm:main} and Proposition \ref{lem:median} and combine constants to obtain the proposition.
    \end{proof}

    Recall that Lackenby showed there exist families of covers $M_i\to M$ such that for any Heegaard surface $\S_i$ for $M_i$, one has $\genus(\S_i)/\vol(M_i)>\e$ \cite{LackenbySplit}. However, finding covers admitting diagrams for which one can show $\rho_1(X_i)/\diam(X_i)$ decays sufficiently slowly seems difficult.

\bibliographystyle{alpha}
\footnotesize{
\bibliography{bib}}
\end{document}